\journal{}
\newtheorem{theorem}{Theorem}[section]
\newtheorem{example}{Example}[section]
\newtheorem{lemma}{Lemma}[section]
\newtheorem{remark}{Remark}[section]
\definecolor{tabclr}{cmyk}{0,0,1,0}
\begin{document}
	
	\title{A BDDC method with an adaptive coarse space  for three-dimensional advection-diffusion problems }
	
	\author[HYNU]{Jie Peng}
	\ead{pengjie24@hynu.edu.cn}
	
	\author[XTU1,XTU2]{Shi Shu}
	\ead{shushi@xtu.edu.cn}
	
	\author[XTU1,XTU2]{Junxian Wang\corref{cor}}
	\ead{wangjunxian@xtu.edu.cn}
	
	\author[SCNU]{Liuqiang Zhong}
	\ead{zhong@scnu.edu.cn}
	
	\cortext[cor]{Corresponding author}
	\address[HYNU]{College of Mathematics and Statistics, Hengyang Normal University, Hengyang 421010, China}
	\address[XTU1]{School of Mathematics and Computational Science, Xiangtan University, Xiangtan 411105, China}
	\address[XTU2]{Hunan Key Laboratory for Computation and Simulation in Science and Engineering,
		Xiangtan University, Xiangtan 411105, China}
	\address[SCNU]{School of Mathematical Sciences, South China Normal University, Guangzhou 510631, China}

	\begin{abstract}

		The solution of nonsymmetric but positive definite (NSPD) systems arising from advection-diffusion problems is an important research topic in science and engineering. Balancing domain decomposition by constraints with an adaptive coarse space(adaptive BDDC) constitute a significant class of nonoverlapping domain decomposition methods, commonly used for symmetric positive definite problems. In this paper, we propose an adaptive BDDC method that incorporates a class of edge generalized eigenvalue problems based on prior selected primal constraints to solve NSPD systems from advection-diffusion problems. Compared with the conventional adaptive BDDC method for such systems, the proposed approach further reduces the number of primal unknowns. Numerical experiments show that although the iteration count increases slightly, the overall computational time is significantly reduced.

	\end{abstract}

	\begin{keyword}
	advection-diffusion equation, nonsymmetric, BDDC, adaptive constraints, generalized eigenvalue problem
		\MSC[2010] 65N30 \sep 65F10 \sep 65N55
	\end{keyword}
	
	\maketitle

	\section{Introduction}\label{sec:1}
	
	
	
	The advection-diffusion equation holds substantial significance in scientific and engineering fields, as it describes the transport of substances and energy in fluids caused by convection and diffusion. It is also a crucial mathematical model for analyzing fluid transport problems. The linear systems arising from finite element discretization of advection-diffusion problems are usually nonsymmetric but positive definite (NSPD). A series of domain decomposition methods have been proposed and analyzed for solving these nonsymmetric systems \cite{CaiXC91:41,ToselliA01:5759,GerardoTallec04:745,TuLi08:25,PengShu21:184}, among which the balancing domain decomposition by constraints (BDDC) method \cite{Dohrmann03:246,MandelDohrmann03:639,MandelDohrmann05:167,LiWidlund06:250}  is a notable example.
	
	
	In the work of Tu and Li \cite{TuLi08:25}, a BDDC method was developed and analyzed for advection-diffusion problems with constant coefficients. Robin boundary conditions were employed to construct the local subdomain bilinear forms, and the coarse-level primal variable space included standard subdomain vertex constraints, edge/face average continuity, and flux average constraints. When the diameters of the subdomains were sufficiently small, a convergence rate estimate for the generalized minimal residual (GMRES) method was obtained. As is well known, the convergence of the BDDC method can be affected by coefficient jumps, coefficient ratios, and geometric details \cite{PechsteinDohrmann17:273}. This motivates the investigation of BDDC methods based on adaptive coarse spaces.

	
	In the work \cite{PengShu21:184}, we extended the adaptive BDDC method, which is usually applied to symmetric positive definite (SPD) problems, to the stabilized finite element discretization systems of the advection-diffusion equations. This preconditioner constructs the coarse space by formulating generalized eigenvalue problems (GEP) for faces and edges, respectively. It is worth noting that the construction procedures for face and edge GEP are similar, but the number of primal unknowns on the edges remains relatively large. Solving the coarse problem associated with the primal unknowns is performed serially, and its size directly affects the computational efficiency of the iterative method. Therefore, provided that the iteration count does not increase significantly, it is crucial to reduce the number of degrees of freedom in the coarse space as much as possible to improve the computational efficiency of the iterative solver.

	%
	
	In recent years, a new approach for a more effective edge GEP was proposed in \cite{PechsteinDohrmann17:273}. By applying this idea, Kim and Wang \cite{KimWang20:1928} proposed and analyzed an adaptive BDDC preconditioner based on prior selected primal constraints for SPD systems of elliptic problems. Compared with the edge GEP in \cite{CalvoWidlund16:524,KimChung17:191}, the number of primal unknowns on edges was significantly reduced. Although the size of the GEP on edges is larger than that of the original ones and both singular value decomposition and QR decomposition are required, these computational costs are acceptable in a parallel computing environment.
	
	
	In this paper, we extend these new edge GEP to the adaptive BDDC method for solving NSPD systems arising from advection-diffusion problems. The convergence theory of the GMRES iteration preconditioned with the adaptive BDDC method is established. Numerical results show that the adaptive BDDC algorithm based on the new GEP on edges is robust and can effectively reduce the size of the coarse space without significantly increasing the iteration count. Furthermore, additional numerical experiments demonstrate that the new method exhibits substantial advantages in computational time in cases involving irregular subdomain partitions and highly varying or random viscosity.

	
	The remainder of this paper is organized as follows. Section 2 briefly introduces the model problem and its finite element variational formulation. Section 3 presents the adaptive BDDC method for NSPD systems arising from advection-diffusion problems, with a focus on the selection of primal constraints. Section 4 reports related numerical experiments, and Section 5 provides concluding remarks.

	\section{Model problem}\label{sec:2}
	
	
	Let $\Omega \subset \mathbb{R}^3$ be a bounded Lipschitz polyhedral domain.
	We consider the advection–diffusion problem given by
	\begin{eqnarray}\label{model-AD-equation}
		\left\{
		\begin{array}{rcll}
			Lu := - \nabla \cdot (\nu \nabla u) + \boldsymbol{a} \cdot \nabla u + c u &=&  f   &\mbox{in}~ \Omega,\\
			u &=& 0 & \mbox{on}~\partial \Omega,
		\end{array}
		\right.
	\end{eqnarray}
	where $\partial \Omega$ denotes the boundary of $\Omega$;
	$\nu \in L^{\infty}(\Omega)$ is the positive viscosity coefficient;
	$\boldsymbol{a} \in (L^{\infty}(\Omega))^3$ is the velocity field with $\nabla \cdot \boldsymbol{a} \in L^{\infty}(\Omega)$;
	$c \in L^{\infty}(\Omega)$ is the reaction coefficient;
	and $f \in L^2(\Omega)$ is the source term.

	%
	
	Define a tetrahedral mesh partition $\mathcal{T}_h$ of the domain $\Omega$, where $h$ denotes the mesh size. For each element $K \in \mathcal{T}_h$, we introduce the Peclet number, which reflects the ratio of the advection rate to the diffusion rate:
	\begin{align*}
		P_{K} = \frac{h_K \|\boldsymbol{a}\|_{K;\infty}}{2\nu},
	\end{align*}
	where $\|\boldsymbol{a}\|_{K;\infty} = \sup_{x \in K} |\boldsymbol{a}(x)|$ and $h_K$ is the diameter of $K$.
	
	For each $x \in K$, define the positive function
	\begin{align*}
		C(x) = \begin{cases}
			\frac{\tau h_K}{2\|\boldsymbol{a}\|_{K;\infty}} & \text{if } P_{K} \ge 1, \\
			\frac{\tau h_K^2}{4\nu} & \text{if } P_{K} < 1,
		\end{cases}
	\end{align*}
	where $\tau$ is a given constant, and we set $\tau = 0.7$ in the numerical experiments. Define $\widetilde{c}(x) = c(x) - \frac{1}{2} \nabla \cdot \boldsymbol{a}(x)$ for $x \in \Omega$, and assume that $\widetilde{c}(x) \ge c_0 > 0$, where $c_0$ is a positive constant.

	%
	%
	%

	Define the linear conforming finite element space $V(\mathcal{T}_h) \subset H_0^1(\Omega)$ on $\mathcal{T}_h$.
	The stabilized finite element discrete variational problem corresponding to the model problem \eqref{model-AD-equation} is formulated as follows (see \cite{ThomasLeopoldo89:173}): find $u_h \in V(\mathcal{T}_h)$ such that
	\begin{align}\label{stab-var-pb}
		a(u_h,v_h) := b(u_h,v_h) + z(u_h,v_h) = g(v_h), \quad \forall v_h \in V(\mathcal{T}_h),
	\end{align}
	where
	\begin{align*}
		b(u_h,v_h) &:= \int_{\Omega} \left( \nu \nabla u_h \cdot \nabla v_h + C(x) L u_h \, L v_h + \widetilde{c} u_h v_h \right) dx, \quad \forall u_h, v_h \in V(\mathcal{T}_h), \\
		z(u_h,v_h) &:= \frac{1}{2} \int_{\Omega} \left( \boldsymbol{a} \cdot \nabla u_h \, v_h - \boldsymbol{a} \cdot \nabla v_h \, u_h \right) dx, \quad \forall u_h, v_h \in V(\mathcal{T}_h), \\
		g(v_h) &:= \int_{\Omega} f v_h \, dx + \int_{\Omega} C(x) f L v_h \, dx, \quad \forall v_h \in V(\mathcal{T}_h).
	\end{align*}
	
	It can be shown that $b(\cdot,\cdot)$ is symmetric positive definite (SPD) and $z(\cdot,\cdot)$ is skew-symmetric. Therefore, the resulting discrete system is nonsymmetric positive definite (NSPD).
	
	In the following section, we design an adaptive BDDC preconditioner for this NSPD system, with a focus on the definition of primal constraints.

	\section{BDDC preconditioner with adaptive primal constraints}\label{sec:3}

	%
	%
	We decompose the mesh $\mathcal{T}_h$ into $N$ non-overlapping subdomains, i.e.
	$\bar{\mathcal{T}}_h = \bigcup_{i=1}^N \bar{\Omega}_i,$
	where each non-ring subdomain $\Omega_i$ is an aggregation of several mesh elements. Any two distinct subdomains are either disjoint or intersect only at faces, edges, or vertices on their interfaces. For precise definitions of faces, edges, and vertices, we refer to \cite{PechsteinDohrmann17:273}.
	
	Based on this partition, we introduce the Schur complement system to be solved by the adaptive BDDC-preconditioned generalized minimal residual (PGMRES) method, along with the necessary function spaces and operators for constructing the preconditioning operator.
	
	Let $V_i$ denote the restriction of $V(\mathcal{T}_h)$ to $\Omega_i$. Since $V(\mathcal{T}_h)$ is spanned by linear finite element basis functions associated with nodes, $V_i$ can be represented as the linear combination of basis functions corresponding to nodes inside $\Omega_i$ and the truncated basis functions on $\Gamma_i:=\partial\Omega_i\backslash \partial\Omega$.
	
	Define the local bilinear functional
	$$a_i(u_h,v_h) := b_i(u_h,v_h) + z_i(u_h,v_h),$$
	where
	\begin{align*}
		b_i(u_h,v_h) &:= \int_{\Omega_i} (\nu \nabla u_h\cdot \nabla v_h + C(x)Lu_h Lv_h + \widetilde{c}u_hv_h) dx,~\forall u_h, v_h \in V_i,\\
		z_i(u_h,v_h) &:= \frac{1}{2} \int_{\Omega_i} (\boldsymbol{a}\cdot \nabla u_h v_h - \boldsymbol{a} \cdot \nabla v_h u_h) dx,~\forall u_h, v_h \in V_i.
	\end{align*}
	
	We decompose $V_i$ into two parts: the function space spanned by the basis functions inside the subdomain and the function space associated with $\Gamma_i$, i.e.,
	$$V_i = V_I^{(i)} \oplus W_i,$$
	where $V_I^{(i)}$ denotes the space spanned by basis functions of internal nodes in $\Omega_i$, and $W_i$ denotes the space of discrete harmonic extension functions on $\Omega_i$, defined by
	
	$$W_i = \{w_i \in V_i: a_i(w_i,v_i) = 0,\forall v_i \in V_I^{(i)}\}.$$

	Let
	$$W=\prod_{i=1}^{N} W_{i}.$$
	Note that functions in $W$ are not required to satisfy continuity across subdomain interfaces.
	
	Introduce the global discrete harmonic extension space $\widehat{W} \subset W$ consisting of functions that are continuous across interfaces
		$$\widehat{W} = \{\widehat{w} \in W: \widehat{w}~\text{is continuous on}~\Gamma:=\bigcup\limits_{i=1}^N\Gamma_i\},$$
	and the partially coupled function space
	$$\widetilde{W} = \{\widetilde{w} \in W: \widetilde{w}~\text{satisfies the primal constriants}\}.$$
	The stiffness matrix $A_i$ and load vector $f_i$ of subdomain $\Omega_i$ can be defined using $a_i(\cdot,\cdot)$ and $g(\cdot)$. Dividing the unknowns into internal and interface unknowns, $A_i$ and $f_i$ take the block form
	
	$$
	A^{(i)}=\left(\begin{array}{cc}
		A_{I I}^{(i)} & A_{I B}^{(i)} \\
		A_{B I}^{(i)} & A_{B B}^{(i)}
	\end{array}\right),~~
	f^{(i)}=\left(\begin{array}{c}
		f_{I}^{(i)} \\
		f_{B}^{(i)}
	\end{array}\right),
	$$
	%
	where $I$ corresponds to internal unknowns and $B$ corresponds to interface unknowns of $\Omega_i$.
	
	The Schur complement matrix and corresponding right-hand side vector for the interface unknowns are
	
	$$
	S^{(i)}=A_{BB}^{(i)} - A_{BI}^{(i)} (A_{II}^{(i)})^{-1} A_{IB}^{(i)},~~
	g^{(i)}=f_{B}^{(i)} - (A_{II}^{(i)})^{-1} f_{I}^{(i)}.
	$$
	
	Let $\widetilde{R}^{(i)}$ denote the restriction operator from $\widetilde{W}$ to $W_i$. Then, the Schur complement matrix and right-hand side vector on the partially coupled space $\widetilde{W}$ are
	
	$$\widetilde{S} = \sum\limits_{i=1}^N (\widetilde{R}^{(i)})^T S^{(i)} \widetilde{R}^{(i)},~~\widetilde{g} = \sum\limits_{i=1}^N (\widetilde{R}^{(i)})^T g^{(i)}.$$

	Let $\widetilde{R}$ be the injection operator from $\widehat{W}$ to $\widetilde{W}$. Then, the Schur complement system for the original problem reads
	
	\begin{align}\label{Schur-complex-sys}
		\widetilde{R}^T \widetilde{S} \widetilde{R} \widehat{u} = \widetilde{R}^T \widetilde{g}.
	\end{align}
	%
	%
	where $\widehat{u}$ represents the restriction of the finite element solution $u_h$ to the internal interfaces.
	
	Finally, let
	$$\widetilde{D} = \sum\limits_{i=1}^N \widetilde{R}_i^T D_i \widetilde{R}_i$$
	be the scaling matrix on $\widetilde{W}$, where $D_i$ is defined on $W_i$ and consists of blocks $D_F^{(i)}$, $D_E^{(i)}$, and $D_V^{(i)}$ associated with the faces, edges, and vertices of $\partial\Omega_i$, respectively. These blocks satisfy the partition of unity condition, i.e., for $X = F, E, V$,
	
	\begin{align}\label{partition-unity-cond}
		\sum\limits_{\nu \in n(X)} D_X^{(\nu)} = I,
	\end{align}
	where $I$ is the identity matrix and $n(X)$ is the set of subdomain indices sharing $X$.
	
	From the above, the BDDC preconditioner for solving the Schur complement system \eqref{Schur-complex-sys} can be written as
	
	$$M_{BDDC}^{-1} = \widetilde{R}^T \widetilde{D} \widetilde{S}^{-1} \widetilde{D}^T \widetilde{R}.$$
	
	It is clear from this definition that the performance of the preconditioner depends on the choice of primal constraints and the scaling matrices. In this paper, we assume that the scaling matrix is chosen as the deluxe scaling matrix \cite{DohrmannPechstein12Tech}, and we will describe a method for selecting the primal constraints below.
	
	Suppose that the partially coupled function space $\widetilde{W}$ satisfies the standard subdomain vertex continuity constraints. We next introduce the primal constraints associated with faces and edges.
	
	To this end, we first decompose each Schur complement matrix $S^{(i)}$ ($i = 1, \dots, N$) into its symmetric and skew-symmetric parts:
	
	%
	%
	%
	%
	$$
	S^{(i)} = B^{(i)} + Z^{(i)},
	$$
	where
	$$
	B^{(i)}  = \frac{1}{2}(S^{(i)} + (S^{(i)})^T),~Z^{(i)} = \frac{1}{2}(S^{(i)} - (S^{(i)})^T).
	$$
	
	Rewrite the matrix $B^{(i)}$ in the form of the following block matrix
	$$
	B^{(i)} = \left(\begin{array}{ll}
		B_{XX}^{(i)} & B_{XC}^{(i)} \\
		B_{CX}^{(i)} & B_{CC}^{(i)}
	\end{array}\right),
	$$
	where $X$ corresponds to the block associated with the internal degrees of freedom of a face $F$ or an edge $E$, and $C$ corresponds to the remaining degrees of freedom. For simplicity of notation, we denote $B_{XX}^{(i)}$ (with $X = F$ or $E$) simply as $B_X^{(i)}$.
	
	Let
	$$\widetilde{B} = \sum\limits_{i=1}^N (\widetilde{R}^{(i)})^T B^{(i)} \widetilde{R}^{(i)},$$
	and define the important operators involved in the theoretical analysis, namely the averaging operator and the jump operator, as
	
	$$
	E_D := \widetilde{R}\widetilde{R}^T \widetilde{D}: \widetilde{W} \rightarrow \widetilde{W},~~~~P_D := I - E_D: \widetilde{W} \rightarrow \widetilde{W}.
	$$
	It can be seen from the analysis in \cite{TuLi08:25} that the key to the convergence analysis of the BDDC algorithm is to prove
	
	$$
	\|E_{D} \widetilde{w}\|_{\widetilde{B}}^2 \le C\Theta \|\widetilde{w}\|_{\widetilde{B}}^2,~\forall \widetilde{w} \in \widetilde{W} \backslash \{0\},
	$$
	and it suffices to prove that
	\begin{equation}\label{conv-proof-key}
		\|P_{D} \widetilde{w}\|_{\widetilde{B}}^2 \leq C\Theta \|\widetilde{w}\|_{\widetilde{B}}^2, \quad \forall \widetilde{w} \in \widetilde{W} \backslash \{0\},
	\end{equation}
	where $C$ is a positive constant, and $\Theta \ge 1$ is a given threshold.
	
	Following \cite{KimChung17:191,KimWang20:1928}, the analysis shows that the key to proving inequality \eqref{conv-proof-key} is to establish the following two estimates on each face $F$ and each edge $E$, respectively:
	
	\begin{align}\label{conv-proof-key-F}
		\sum_{i \in n(F)}\left\langle B_{F}^{(i)} \sum_{k \in n(F)} D_{F}^{(k)}\left(w_{i, F}-w_{k, F}\right), \sum_{k \in n(F)} D_{F}^{(k)}\left(w_{i, F}-w_{k, F}\right)\right\rangle \leq C \sum_{i \in n(F)}\left\langle B^{(i)} w_{i}, w_{i}\right\rangle,
	\end{align}
	\begin{align}\label{estimate-key-E-1}
		\sum_{i \in n(E)}\left\langle B_{E}^{(i)} \sum_{k \in n(E)} D_{E}^{(k)}\left(w_{i, E}-w_{k, E}\right), \sum_{k \in n(E)} D_{E}^{(k)}\left(w_{i, E}-w_{k, E}\right)\right\rangle \le C
		\sum_{i \in n(E)} \left\langle B^{(i)} w_{i}, w_{i}\right\rangle,
	\end{align}
	where $B_X^{(\nu)}$ is the block corresponding to the internal degrees of freedom of $X$ (with $X = F$ or $E$) in $B^{(\nu)}$, $n(X)$ denotes the set of subdomain indices sharing $X$,
	$w_\nu$ denotes the restriction of $\widetilde{w}$ to the unknowns on $\Gamma_\nu$,
	and $w_{\nu,X}$ $(X = F, E)$ denotes the restriction of $w_\nu$ to the unknowns in $X$.
	
	In the following, we present the adaptive primal constraints on faces and edges that ensure the above two inequalities hold.
	%
	
	Firstly, the adaptive primal constraints on faces are introduced.

		For a given face $F$, assume that $n(F) = \{i,j\}$. Noting that the scaling matrices satisfy the partition of unity condition \eqref{partition-unity-cond}, the left-hand side of \eqref{conv-proof-key-F} can be equivalently rewritten as
		
		\begin{align}\nonumber
			&\sum_{i \in n(F)} (w_{i, F} - \sum_{k \in n(F)} D_{F}^{(k)} w_{k, F})^T B_{F}^{(i)} (w_{i, F} - \sum_{k \in n(F)} D_{F}^{(k)} w_{k, F}) \\\nonumber
			=& \sum_{i \in n(F)} (w_{i, F} - w_{j, F})^T (D_{F}^{(j)})^T B_{F}^{(i)} D_{F}^{(j)} (w_{i, F} - w_{j, F}) \\\nonumber
			=& (w_{i, F} - w_{j, F})^T (D_{F}^{(j)})^T B_{F}^{(i)} D_{F}^{(j)} (w_{i, F} - w_{j, F}) +
			(w_{i, F} - w_{j, F})^T (D_{F}^{(i)})^T B_{F}^{(j)} D_{F}^{(i)} (w_{i, F} - w_{j, F}) \\\label{conv-proof-key-F-leftside}
			=& (w_{i, F} - w_{j, F})^T B_{F} (w_{i, F} - w_{j, F}),
		\end{align}
		where
		$$
		B_{F}=\left(D_{F}^{(j)}\right)^{T} B_{F}^{(i)} D_{F}^{(j)}+\left(D_{F}^{(i)}\right)^{T} B_{F}^{(j)} D_{F}^{(i)}.
		$$
		
		Denote the Schur complement of $B^{(\nu)}$ ($\nu = i,j$) with respect to the unknowns interior to the face $F$ by
		
		$$\widetilde{B}_F^{(\nu)} = B_{FF}^{(\nu)} - B_{FC}^{(\nu)}(B_{CC}^{(\nu)})^{-1} B_{CF}^{(\nu)},$$
		and let
		$$
		\widetilde{B}_{F}=\widetilde{B}_{F}^{(i)}: \widetilde{B}_{F}^{(j)},
		$$
		%
		where the notation $A:B = A(A+B)^{\dagger}B$ denotes the parallel sum of $A$ and $B$ as defined in \cite{AndersonDuffin69:576}.
		
		Using the minimal energy property of the Schur complement matrices $\widetilde{B}_F^{(\nu)}$ and the properties of the parallel sum, the right-hand side of \eqref{conv-proof-key-F} satisfies
		
		\begin{align}\nonumber
			w_{i}^T B^{(i)} w_{i} + w_{j}^T B^{(j)} w_{j}
			\ge & w_{i,F}^T \widetilde{B}_F^{(i)} w_{i,F} + w_{j,F}^T \widetilde{B}_F^{(j)} w_{j,F}\\\label{conv-proof-key-F-rightside}
			\ge & w_{i,F}^T \widetilde{B}_F w_{i,F} + w_{j,F}^T \widetilde{B}_F w_{j,F}.
		\end{align}

		Therefore, combining \eqref{conv-proof-key-F-leftside} and \eqref{conv-proof-key-F-rightside}, it suffices to impose the following estimate under suitable primal constraints on $F$ to ensure \eqref{conv-proof-key-F} holds:
		
		\begin{align}\label{conv-proof-key-F-key}
			(w_{i, F} - w_{j, F})^T B_{F} (w_{i, F} - w_{j, F}) \le C(w_{i,F}^T \widetilde{B}_F w_{i,F} + w_{j,F}^T \widetilde{B}_F w_{j,F}).
		\end{align}
		Finally, introduce the generalized eigenvalue problem
		\begin{align}\label{equ-4-8}
			B_{F} v_{F}=\lambda \widetilde{B}_{F} v_{F}.
		\end{align}


	Suppose the number of degrees of freedom on the face $F$ is $n_F$. For $k = 1,2,\dots,n_F$, let $\lambda_{F,k}$ be the eigenvalues of the generalized eigenvalue problem \eqref{equ-4-8}, with corresponding eigenvectors $v_{F,k}$. For a given threshold $\Theta_F \ge 1$, assume that
	
	$$
	\lambda_{F,1} \ge \lambda_{F,2} \ge \cdots \ge \lambda_{F,n_{\Pi}^{F}} \ge \Theta_F > \lambda_{F,n_{\Pi}^{F} + 1} \ge \lambda_{F,n_{\Pi}^{F} + 2} \ge \cdots
	\lambda_{F,n_{F}}.
	$$
	
		Denote $N_{F} = \{1,2,\cdots,n_{\Pi}^F\}$  and $N_{F}^{\perp} = \{n_{\Pi}^F+1,n_{\Pi}^F+2,\cdots,n_F\}$.
		We assume that the eigenvectors additionally satisfy the following orthogonality conditions:
		$$\left(\widetilde{B}_{F} v_{F, l}\right)^{T} v_{F,m} = 0,~l \in N_{F}~\mbox{and}~m \in N_{F}^{\perp},$$
		and
		$$\left(B_{F} v_{F, l}\right)^{T} v_{F,m} = \delta_{lm},~l,m \in N_{F} \cup N_{F}^{\perp},$$
		where $\delta_{lm}$ is Kronecker delta function.

	Further, the adaptive primal constraints will then be enforced on the unknowns in $F$,
	\begin{align}\label{primal-constraints-F}
		\left(B_{F} v_{F, l}\right)^{T}\left(w_{i, F}-w_{j, F}\right)=0, l \in N_{F}.
	\end{align}
		After performing a change of basis and using the properties of the eigenvectors, the above constraints allow the decomposition
		\begin{align}\label{function-decompose}
			w_{\nu, F} = (w_{\nu, F})_{\Pi} + (w_{\nu, F})_{\Delta},~~~\nu = i,j,
		\end{align}
		where $(w_{\nu,F})_\Pi$ and $(w_{\nu,F})_\Delta$ are called the coarse (or adaptive primal) and dual parts of $w_{\nu,F}$, respectively, satisfying
		$$
		(w_{\nu, F})_{\Pi} = \sum\limits_{l \in N_{F}} \langle B_{F}w_{\nu,F}, v_{F,l}\rangle v_{F,l},~
		(w_{\nu, F})_{\Delta} = \sum\limits_{l \in N_{F}^{\perp}} \langle B_{F}w_{\nu,F}, v_{F,l}\rangle v_{F,l}.
		$$
		
		%
		Then, using the properties of the eigenvectors, the coarse components on $F$ are identical across subdomains, i.e.,
		\begin{align}\label{primal-constraints-F-equiv}
			(w_{i, F})_{\Pi} = (w_{j, F})_{\Pi}.
		\end{align}

	With the above choice of adaptive primal unknowns for a face $F$, the following lemma holds.
	\begin{lemma}\label{lemma-F}
		Let $w_i$, $i \in n(F)$, satisfy the adaptive primal constraints \eqref{primal-constraints-F} on the face $F$. Then the following estimate holds:
		
		$$
		\sum_{i \in n(F)}\left\langle B_{F}^{(i)} \sum_{j \in n(F)} D_{F}^{(j)}\left(w_{i, F}-w_{j, F}\right), \sum_{j \in n(F)} D_{F}^{(j)}\left(w_{i, F}-w_{j, F}\right)\right\rangle \leq 2 \Theta_F \sum_{i \in n(F)}\left\langle B^{(i)} w_{i}, w_{i}\right\rangle,
		$$
		where $w_{i, F}$ denote the restriction of $w_{i}$ to the unknowns in the face $F$, and $\Theta_F \ge 1$ is any given threshold.	
	\end{lemma}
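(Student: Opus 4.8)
The plan is to reduce the claimed estimate to the single face inequality \eqref{conv-proof-key-F-key} with the explicit constant $C = 2\Theta_F$, and then to establish that inequality by diagonalizing in the eigenbasis of \eqref{equ-4-8}. First I would observe that the left-hand side of the lemma is exactly the left-hand side of \eqref{conv-proof-key-F}, which by the partition-of-unity computation \eqref{conv-proof-key-F-leftside} equals $(w_{i,F}-w_{j,F})^T B_F (w_{i,F}-w_{j,F})$. On the other side, the chain \eqref{conv-proof-key-F-rightside}, resting on the minimal-energy property of the Schur complements $\widetilde{B}_F^{(\nu)}$ and the monotonicity of the parallel sum, gives $\sum_{i \in n(F)} \langle B^{(i)} w_i, w_i\rangle \ge w_{i,F}^T \widetilde{B}_F w_{i,F} + w_{j,F}^T \widetilde{B}_F w_{j,F}$. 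Hence it suffices to prove \eqref{conv-proof-key-F-key} with $C = 2\Theta_F$.

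For the core inequality I would work in the basis $\{v_{F,l}\}_{l=1}^{n_F}$ of eigenvectors of \eqref{equ-4-8}, writing $w_{i,F} = \sum_l a_l v_{F,l}$ and $w_{j,F} = \sum_l b_l v_{F,l}$. The $B_F$-orthonormality $(B_F v_{F,l})^T v_{F,m} = \delta_{lm}$ shows that the $l$-th coefficient of $w_{i,F}-w_{j,F}$ is precisely $(B_F v_{F,l})^T(w_{i,F}-w_{j,F}) = a_l - b_l$, so the adaptive primal constraints \eqref{primal-constraints-F} force $a_l = b_l$ for every $l \in N_F$; equivalently, the difference lives entirely in the dual span indexed by $N_F^\perp$. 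Using $B_F$-orthonormality once more, the left-hand side of \eqref{conv-proof-key-F-key} collapses to $\sum_{l \in N_F^\perp}(a_l - b_l)^2$.

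Next I would rewrite the right-hand side in the same coordinates. From $B_F v_{F,l} = \lambda_{F,l}\widetilde{B}_F v_{F,l}$ together with the $B_F$-orthonormality one reads off $v_{F,l}^T \widetilde{B}_F v_{F,m} = \lambda_{F,l}^{-1}\delta_{lm}$, so $w_{i,F}^T \widetilde{B}_F w_{i,F} + w_{j,F}^T \widetilde{B}_F w_{j,F} = \sum_l (a_l^2 + b_l^2)/\lambda_{F,l}$. The spectral split is then exploited termwise: for each $l \in N_F^\perp$ one has $\lambda_{F,l} < \Theta_F$, hence $1 < \Theta_F/\lambda_{F,l}$, while the elementary bound $(a_l-b_l)^2 \le 2(a_l^2 + b_l^2)$ handles the difference. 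Combining these gives
\begin{align*}
\sum_{l \in N_F^\perp}(a_l - b_l)^2 \le 2\sum_{l \in N_F^\perp}(a_l^2 + b_l^2) \le 2\Theta_F \sum_{l \in N_F^\perp}\frac{a_l^2 + b_l^2}{\lambda_{F,l}} \le 2\Theta_F\sum_{l=1}^{n_F}\frac{a_l^2 + b_l^2}{\lambda_{F,l}},
\end{align*}
which is \eqref{conv-proof-key-F-key} with $C = 2\Theta_F$. Chaining this with the two reductions above yields the stated bound.

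I expect the only genuine subtlety to be the bookkeeping that makes the primal constraint remove precisely the $N_F$-components of the difference: this requires the two orthogonality conditions imposed on the eigenvectors (the $B_F$-orthonormality over $N_F \cup N_F^\perp$ and the $\widetilde{B}_F$-orthogonality between $N_F$ and $N_F^\perp$) to be used consistently, so that no cross terms survive in either quadratic form. The remaining ingredients, namely the eigenvalue diagonalization of $\widetilde{B}_F$, the termwise threshold comparison, and the factor-of-two elementary inequality, are routine; the positivity of $\lambda_{F,l}$ and the invertibility of $\widetilde{B}_F$ needed to justify the expansions follow from the SPD structure of the $B^{(\nu)}$ and the parallel-sum construction.
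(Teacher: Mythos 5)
Your proof is correct and follows essentially the same route as the paper: both reduce the lemma to \eqref{conv-proof-key-F-key} via \eqref{conv-proof-key-F-leftside}--\eqref{conv-proof-key-F-rightside}, use the primal constraints \eqref{primal-constraints-F} to remove the $N_F$-components of the difference, apply the spectral threshold bound together with the elementary factor-of-two inequality, and reinsert the primal components using the $\widetilde{B}_F$-orthogonality. The only cosmetic difference is that you diagonalize explicitly in the eigenbasis $\{v_{F,l}\}$, whereas the paper phrases the identical steps abstractly through the decomposition $w_{\nu,F}=(w_{\nu,F})_{\Pi}+(w_{\nu,F})_{\Delta}$.
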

		\begin{proof}
			It suffices to prove \eqref{conv-proof-key-F-key}.	
			By using the decomposition \eqref{function-decompose}, the equality of coarse components \eqref{primal-constraints-F-equiv}, and the generalized eigenvalue problem \eqref{equ-4-8}, we have
			
			\begin{align}\nonumber
				(w_{i, F} - w_{j, F})^T B_{F} (w_{i, F} - w_{j, F})
				= &\langle B_F ((w_{i, F})_{\Delta} - (w_{j, F})_{\Delta}),((w_{i, F})_{\Delta} - (w_{j, F})_{\Delta}) \rangle\\\nonumber
				\le & \Theta_{F} \langle \widetilde{B}_{F} ((w_{i, F})_{\Delta} - (w_{j, F})_{\Delta}),((w_{i, F})_{\Delta} - (w_{j, F})_{\Delta})\rangle  \\\nonumber
				\le & 2 \Theta_{F} (\langle \widetilde{B}_{F} (w_{i, F})_{\Delta},(w_{i, F})_{\Delta} \rangle + \langle \widetilde{B}_{F} (w_{j, F})_{\Delta},(w_{j, F})_{\Delta}\rangle \\\nonumber	
				\le & 2 \Theta_{F} (\langle \widetilde{B}_{F} ((w_{i, F})_{\Pi} + (w_{i, F})_{\Delta}),((w_{i, F})_{\Pi} + (w_{i, F})_{\Delta}) \rangle \\\nonumber	
				&~~~~+ \langle \widetilde{B}_{F} ((w_{j, F})_{\Pi} + (w_{j, F})_{\Delta}),((w_{j, F})_{\Pi} + (w_{j, F})_{\Delta}) \rangle) \\\nonumber	
				= & 2 \Theta_{F} [(w_{i, F})^T \widetilde{B}_{F} w_{i, F} + (w_{j, F})^T \widetilde{B}_{F} w_{j, F}].
			\end{align}
		\end{proof}

	Next, we present the adaptive primal constraints on edge $E$ such that the estimate \eqref{estimate-key-E-1} holds.

	For ease of exposition, we assume that the set of subdomain indices sharing edge $E$ is $n(E) = \{1,2,3\}$. Noting that $\sum_{k=1}^3 D_E^{(k)} = I$, the left-hand side of \eqref{estimate-key-E-1} can be equivalently written as
	
	\begin{align}\label{estimate-key-E-2}
		\sum_{i=1}^3 (w_{i, E} - \sum_{k=1}^3 D_{E}^{(k)} w_{k, E})^T B_{E}^{(i)} (w_{i, E} - \sum_{k=1}^3 D_{E}^{(k)} w_{k, E}).
	\end{align}
	
		Since the number of subdomains sharing an edge is more than two, in order to construct coarse degrees of freedom using a generalized eigenvalue problem analogous to that on faces, we first estimate \eqref{estimate-key-E-2} as follows:
		\begin{align}\nonumber \label{estimate-key-E-1-leftside}
			&	\sum_{i \in n(E)} \left\langle B_{E}^{(i)}(w_{i, E} - \sum_{k \in n(E)} D_{E}^{(k)} w_{k, E}), (w_{i, E} - \sum_{k \in n(E)} D_{E}^{(k)} w_{k, E}) \right\rangle\\\nonumber
			= & \sum_{i \in n(E)}\left\langle B_{E}^{(i)} \sum_{k \in n(E) \backslash \{i\}} D_{E}^{(k)}\left(w_{i, E}-w_{k, E}\right), \sum_{k \in n(E) \backslash \{i\} } D_{E}^{(k)}\left(w_{i, E}-w_{k, E}\right)\right\rangle   \\\nonumber
			\le & (|n(E)| - 1)  \sum_{i \in n(E)} \sum_{k \in n(E) \backslash \{i\}} \left\langle B_{E}^{(i)} D_{E}^{(k)}\left(w_{i, E}-w_{k, E}\right), D_{E}^{(k)}\left(w_{i, E}-w_{k, E}\right)\right\rangle   \\
			= & (|n(E)| - 1)  \sum_{i \in n(E)} \sum_{k \in n(E) \backslash \{i\}} \left\langle (D_{E}^{(k)})^T B_{E}^{(i)} D_{E}^{(k)}\left(w_{i, E}-w_{k, E}\right), \left(w_{i, E}-w_{k, E}\right)\right\rangle       	
		\end{align}
		
		Denote the Schur complement matrix of $B^{(\nu)} (\nu \in n(E))$  with respect to the unknowns interior to the face $E$ as $\widetilde{B}_E^{(\nu)}$, i.e.
		$$\widetilde{B}_E^{(\nu)} = B_{EE}^{(\nu)} - B_{EC}^{(\nu)}(B_{CC}^{(\nu)})^{-1} B_{CE}^{(\nu)},$$
		and let
		$$
		\widetilde{B}_{E}=\widetilde{B}_{E}^{(1)}: \widetilde{B}_{E}^{(2)} : \widetilde{B}_{E}^{(3)}.
		$$
		
		Using the minimal energy property which satisfied by the Schur complement matrices $\widetilde{B}_E^{(\nu)} (\nu = 1,2,3)$ and the property which satisfied by the parallel sum, the right-hand side of \eqref{estimate-key-E-1} satisfies
		\begin{align}\label{estimate-key-E-1-rightside}
			\sum\limits_{i \in n(E)}w_{i}^T B^{(i)} w_{i}
			\ge  \sum\limits_{i \in n(E)} w_{i,E}^T \widetilde{B}_E^{(i)} w_{i,E}
			\ge  \sum\limits_{i \in n(E)} w_{i,E}^T \widetilde{B}_E w_{i,E}.
		\end{align}
		
		Therefore, using \eqref{estimate-key-E-1-leftside} and \eqref{estimate-key-E-1-rightside}, we can see that it suffices to make the following estimate holds under certain primal constraints on $E$
		to ensure \eqref{estimate-key-E-1} holds, i.e.
		\begin{align}\label{estimate-key-E-1-key}
			\sum_{i \in n(E)} \sum_{k \in n(E) \backslash \{i\}} \left\langle (D_{E}^{(k)})^T B_{E}^{(i)} D_{E}^{(k)}\left(w_{i, E}-w_{k, E}\right), \left(w_{i, E}-w_{k, E}\right)\right\rangle < C \sum\limits_{i \in n(E)} w_{i,E}^T \widetilde{B}_E w_{i,E}.
		\end{align}
		
		Introduce a generalized eigenvalue problem(\cite{KimChung17:191})
		\begin{align}\label{equ-4-8-E}
			B_{E} v_{E} = \lambda \widetilde{B}_{E} v_{E},
		\end{align}
		where
		$$
		B_{E} = \sum_{i \in n(E)} \sum_{k \in n(E) \backslash \{i\}} D_{E}^{(k)})^T B_{E}^{(i)} D_{E}^{(k)}.
		$$
		
		Suppose the number of degrees of freedom on the edge $E$ is $n_E$. For $k = 1,2,\cdots, n_E$, let $\lambda_{E,k}$ be the eigenvalue of the generalized eigenvalue problem \eqref{equ-4-8-E}, and its corresponding eigenvector be $v_{E,k}$. For a given threshold $\Theta_E \ge 1$, assume that
		$$
		\lambda_{E,1} \ge \lambda_{E,2} \ge \cdots \ge \lambda_{E,n_{\Pi}^{E}} \ge \Theta_E > \lambda_{E,n_{\Pi}^{E} + 1} \ge \lambda_{E,n_{\Pi}^{E} + 2} \ge \cdots
		\lambda_{E,n_{E}}.
		$$
		
		Denote $N_{E} = \{1,2,\cdots,n_{\Pi}^E\}$  and $N_{E}^{\perp} = \{n_{\Pi}^E+1,n_{\Pi}^E+2,\cdots,n_E\}$.
		We assume that the eigenvectors additionally fulfill the following requirements
		$$\left(\widetilde{B}_{E} v_{E, l}\right)^{T} v_{E,m} = 0,~l \in N_{E}~\mbox{and}~m \in N_{E}^{\perp},$$
		and
		$$\left(B_{E} v_{E, l}\right)^{T} v_{E,m} = \delta_{lm},~l,m \in N_{E} \cup N_{E}^{\perp},$$
%
		where $\delta_{lm}$ is Kronecker delta function.

		Further, the adaptive primal constraints will then be enforced on the unknowns in $E$,
		\begin{align}\label{primal-constraints-E}
			\left(B_{E} v_{E, l}\right)^{T}\left(w_{i, E}-w_{j, E}\right)=0, l \in N_{E}.
		\end{align}
		
		After performing a change of basis and combining with the conditions satisfied by the eigenvectors, the previously established constraints can be expressed directly in terms of the unknowns, i.e.
		\begin{align}\label{function-decompose-E}
			w_{\nu, E} = (w_{\nu, E})_{\Pi} + (w_{\nu, E})_{\Delta},~\nu \in n(E),
		\end{align}
		where $(w_{\nu, E})_{\Pi}$ and $(w_{\nu, E})_{\Delta}$ called the coarse part (or adaptive primal part) and dual part of $w_{\nu, E}$ respectively, and which satisfy
		$$
		(w_{\nu, E})_{\Pi} = \sum\limits_{l \in N_{E}} \langle B_{E}w_{\nu,E}, v_{E,l}\rangle v_{E,l},~
		(w_{\nu, E})_{\Delta} = \sum\limits_{l \in N_{E}^{\perp}} \langle B_{E}w_{\nu,E}, v_{E,l}\rangle v_{E,l}.
		$$
		
		%
		Then, by using the conditions satisfied by the eigenvectors, we can see the coarse components  on $E$ are identical, i.e.,
		\begin{align}\label{primal-constraints-E-equiv}
			(w_{i, E})_{\Pi} = (w_{j, E})_{\Pi}, i,j\in n(E).
		\end{align}
		
		With the above choice of adaptive primal unknowns for an edge $E$, the following lemma can be obtained.
		\begin{lemma}\label{lemma-E-1}
			For $w_{i}, i \in n(E)$, satisfying the adaptive primal constraints \eqref{primal-constraints-E} on the edge $E$, the following estimate holds,
			$$
			\sum_{i \in n(E)}\left\langle B_{E}^{(i)} \sum_{j \in n(E)} D_{E}^{(j)}\left(w_{i, E}-w_{j, E}\right), \sum_{j \in n(E)} D_{E}^{(j)}\left(w_{i, E}-w_{j, E}\right)\right\rangle \leq 2 \Theta_E \sum_{i \in n(E)}\left\langle B^{(i)} w_{i}, w_{i}\right\rangle,
			$$
			where $w_{i, E}$ denote the restriction of $w_{i}$ to the unknowns in the edge $E$, and $\Theta_E \ge 1$ is any given threshold.
		\end{lemma}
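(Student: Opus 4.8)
The plan is to follow the template of the proof of Lemma \ref{lemma-F}, reducing everything to the single edge estimate \eqref{estimate-key-E-1-key}. The two displayed bounds \eqref{estimate-key-E-1-leftside} and \eqref{estimate-key-E-1-rightside} are already in place: the former controls the left-hand side of the lemma by $(|n(E)|-1)=2$ times the pairwise sum
\[
T := \sum_{i \in n(E)} \sum_{k \in n(E)\setminus\{i\}} \left\langle (D_E^{(k)})^T B_E^{(i)} D_E^{(k)} (w_{i,E}-w_{k,E}),\, (w_{i,E}-w_{k,E}) \right\rangle ,
\]
while the latter bounds $\sum_{i\in n(E)} w_{i,E}^T \widetilde{B}_E w_{i,E}$ from above by $\sum_{i\in n(E)}\langle B^{(i)}w_i,w_i\rangle$ through the minimal-energy property and the parallel-sum inequality $\widetilde{B}_E \preceq \widetilde{B}_E^{(i)}$. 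Hence it suffices to establish $T \le \Theta_E \sum_{i\in n(E)} w_{i,E}^T \widetilde{B}_E w_{i,E}$, i.e.\ \eqref{estimate-key-E-1-key} with $C=\Theta_E$; multiplying by the factor $2$ then delivers the asserted constant $2\Theta_E$.

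The mechanism driving the estimate mirrors the face case. First I would invoke the change of basis behind \eqref{function-decompose-E} together with the equality of coarse components \eqref{primal-constraints-E-equiv}: since $(w_{i,E})_\Pi=(w_{k,E})_\Pi$ for all $i,k\in n(E)$, every difference collapses onto its dual part, $w_{i,E}-w_{k,E}=(w_{i,E})_\Delta-(w_{k,E})_\Delta$, which lies in the span of the eigenvectors $\{v_{E,l}:l\in N_E^\perp\}$. On this dual subspace the ordering $\lambda_{E,l}<\Theta_E$, combined with the $B_E$-orthonormality and $\widetilde{B}_E$-orthogonality imposed on the eigenvectors, gives the one-sided spectral bound $\langle B_E v,v\rangle\le\Theta_E\langle\widetilde{B}_E v,v\rangle$ for every dual vector $v$, which is exactly the content of the generalized eigenvalue problem \eqref{equ-4-8-E}. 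Because each scaled block $(D_E^{(k)})^T B_E^{(i)} D_E^{(k)}$ is a single positive-semidefinite summand of $B_E=\sum_{i}\sum_{k\neq i}(D_E^{(k)})^T B_E^{(i)} D_E^{(k)}$, each term of $T$ is dominated by $\langle B_E(w_{i,E}-w_{k,E}),(w_{i,E}-w_{k,E})\rangle$, to which the spectral bound applies. It then remains to discard the coarse contributions using the $\widetilde{B}_E$-orthogonality of the $\Pi$ and $\Delta$ parts, so that the right-hand side can be re-expressed in terms of the full $w_{i,E}$ rather than only their dual parts.

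The hard part is the bookkeeping of constants, which is genuinely harder than on a face. For a face, $n(F)=\{i,j\}$ contains a single difference vector, so $T$ collapses to one quadratic form in $B_F$ and the threshold inflates by only the factor $2$ coming from $\langle\widetilde{B}_F(a-b),(a-b)\rangle\le 2\langle\widetilde{B}_F a,a\rangle+2\langle\widetilde{B}_F b,b\rangle$; on an edge shared by three subdomains, $T$ is an irreducible sum over pairs, each carrying its own scaling $(D_E^{(k)})^T B_E^{(i)} D_E^{(k)}$, and a naive combination of the Cauchy–Schwarz factor from \eqref{estimate-key-E-1-leftside}, the term-dropping $(D_E^{(k)})^T B_E^{(i)} D_E^{(k)}\preceq B_E$, and the triangle inequality accumulates a constant larger than $2\Theta_E$. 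Recovering the clean constant therefore requires not the crude pairwise estimates but a sharper use of the deluxe scaling and the partition-of-unity property \eqref{partition-unity-cond}, which produces the cancellations that tie $T$ back to $\sum_{i\in n(E)} w_{i,E}^T \widetilde{B}_E w_{i,E}$ without the combinatorial blow-up; this is the step I expect to demand the most care.
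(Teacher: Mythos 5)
Your reduction matches the paper's: control the left-hand side by $(|n(E)|-1)=2$ times the pairwise sum $T$ via \eqref{estimate-key-E-1-leftside}, control the right-hand side from below via \eqref{estimate-key-E-1-rightside}, and prove the intermediate estimate \eqref{estimate-key-E-1-key}. But your proposal never actually proves that estimate: you correctly note that the crude route (dominating each scaled block by the full $B_E$ before expanding the differences) blows up the constant, and you then defer the decisive step to an unspecified ``sharper use of the deluxe scaling and the partition-of-unity property'' that is supposed to produce cancellations. That deferred step is the entire content of the lemma, and no such cancellation argument exists in the paper --- none is needed. This is a genuine gap, and moreover the mechanism you anticipate is the wrong one.

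What closes the argument in the paper is merely the order of operations plus positive semidefiniteness of the scaled blocks. First use \eqref{function-decompose-E} and \eqref{primal-constraints-E-equiv} to replace each difference $w_{i,E}-w_{k,E}$ by $(w_{i,E})_{\Delta}-(w_{k,E})_{\Delta}$. Then apply $\langle M(a-b),a-b\rangle \le 2\langle Ma,a\rangle+2\langle Mb,b\rangle$ term by term with $M=(D_E^{(k)})^T B_E^{(i)} D_E^{(k)}$, \emph{before} any comparison with $B_E$. Finally regroup: for a fixed index $j$, the blocks now acting on $(w_{j,E})_{\Delta}$ are exactly those ordered pairs with $i=j$ or $k=j$, a sub-collection of the positive semidefinite summands whose total is $B_E$, hence their sum is bounded by $B_E$ in the quadratic-form sense. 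This yields
$$
T \le 2\sum_{j\in n(E)}\left\langle B_E (w_{j,E})_{\Delta},(w_{j,E})_{\Delta}\right\rangle
\le 2\Theta_E\sum_{j\in n(E)}\left\langle \widetilde{B}_E (w_{j,E})_{\Delta},(w_{j,E})_{\Delta}\right\rangle
\le 2\Theta_E\sum_{j\in n(E)} w_{j,E}^T\widetilde{B}_E\, w_{j,E},
$$
the last step by the $\widetilde{B}_E$-orthogonality of the $\Pi$ and $\Delta$ parts; there is no step where the partition of unity produces a cancellation beyond $\sum_k D_E^{(k)}=I$ already used in \eqref{estimate-key-E-1-leftside}. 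Note that this gives $C=2\Theta_E$ in \eqref{estimate-key-E-1-key}, not the $C=\Theta_E$ you set as your target: combined with the factor $|n(E)|-1=2$, the paper's own chain actually delivers $2(|n(E)|-1)\Theta_E=4\Theta_E$, so the constant $2\Theta_E$ stated in the lemma is not literally attained by the paper's proof either (it appears to be carried over from the face case, where $|n(F)|-1=1$). Your plan to recover exactly $2\Theta_E$ through deluxe-scaling cancellations therefore chases a sharper constant than the argument supports, and the proposal supplies no mechanism that would achieve it.
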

		
		\begin{proof}
			It suffices to prove \eqref{estimate-key-E-1-key}.	
			
			By using \eqref{function-decompose}, \eqref{primal-constraints-E-equiv} and \eqref{equ-4-8-E}, we can see that
			\begin{align}\nonumber
				&\sum_{i \in n(E)} \sum_{k \in n(E) \backslash \{i\}} \left\langle (D_{E}^{(k)})^T B_{E}^{(i)} D_{E}^{(k)}\left(w_{i, E}-w_{k, E}\right), \left(w_{i, E}-w_{k, E}\right)\right\rangle \\\nonumber
				= &\sum_{i \in n(E)} \sum_{k \in n(E) \backslash \{i\}} \left\langle (D_{E}^{(k)})^T B_{E}^{(i)} D_{E}^{(k)}\left((w_{i, E})_{\Delta}-(w_{k, E})_{\Delta}\right), \left((w_{i, E})_{\Delta}-(w_{k, E})_{\Delta}\right)\right\rangle\\\nonumber
				\le &2 \sum_{i \in n(E)} \sum_{k \in n(E) \backslash \{i\}} \left\langle (D_{E}^{(k)})^T B_{E}^{(i)} D_{E}^{(k)}\left((w_{i, E})_{\Delta}, (w_{i, E})_{\Delta}\right)\right\rangle \\\nonumber
				&+ 2 \sum_{i \in n(E)} \sum_{k \in n(E) \backslash \{i\}} \left\langle (D_{E}^{(k)})^T B_{E}^{(i)} D_{E}^{(k)}\left((w_{k, E})_{\Delta}, (w_{k, E})_{\Delta}\right)\right\rangle
				\\\nonumber
				\le &2 \sum_{i \in n(E)} \left\langle B_E \left((w_{i, E})_{\Delta}, (w_{i, E})_{\Delta}\right)\right\rangle
				\\\nonumber
				\le &2 \Theta_E \sum_{i \in n(E)} \left\langle \widetilde{B}_{E} \left((w_{i, E})_{\Delta}, (w_{i, E})_{\Delta}\right)\right\rangle
				\\\nonumber
				\le & 2 \Theta_{E} \sum_{i \in n(E)}(\langle \widetilde{B}_{E} ((w_{i, E})_{\Pi} + (w_{i, E})_{\Delta}),((w_{i, E})_{\Pi} + (w_{i, E})_{\Delta}) \rangle) \\\nonumber	
				= & 2 \Theta_{E} \sum_{i \in n(E)} (w_{i, E})^T \widetilde{B}_{E} w_{i, E}.
			\end{align}
		\end{proof}
		
		It can be observed that, compared to the construction of coarse degrees of freedom on faces,
		the construction process on edges involves additional approximations (see \eqref{estimate-key-E-1-leftside} for more details).
		This explains why the number of coarse degrees of freedom (or primal unknowns) on edges
		remains suboptimal in numerical experiments.
		To overcome this issue, inspired by \cite{PechsteinDohrmann17:273} and \cite{KimWang20:1928}, we propose a novel approach for constructing coarse degrees of freedom on edges.

	In the following, we will transform both the left and right expressions of \eqref{estimate-key-E-1} by introducing a new set of degrees of freedom.

	Let
	\begin{align}\label{equ-5-19}
		\check{w}_{k,E}=w_{k,E}-w_{1,E},~~ k=2,3, \quad \widehat{w}_{E}=\sum_{k=1}^{3} D_{E}^{(k)} w_{k,E}
	\end{align}
	and then we have
	$$
	\begin{aligned}
		w_{1,E} &=-\sum_{k=2}^{3} D_{E}^{(k)} \check{w}_{k,E}+\widehat{w}_{E}, \\
		w_{2,E} &=\left(I-D_{E}^{(2)}\right) \check{w}_{2,E}-D_{E}^{(3)} \check{w}_{3,E}+\widehat{w}_{E}, \\
		w_{3,E} &=-D_{E}^{(2)} \check{w}_{2,E}+\left(I-D_{E}^{(3)}\right) \check{w}_{3,E}+\widehat{w}_{E} .
	\end{aligned}
	$$
	
	Using the above identity, we can introduce  change of unknowns as follows,
	\begin{align} \label{equ-trans}
		w_{k,E}=T_{E}^{(k)}\left(\begin{array}{c}
			\check{w}_{2,E} \\
			\check{w}_{3,E} \\
			\widehat{w}_{E}
		\end{array}\right), \quad k=1,2,3,
	\end{align}
	where
	$$
	\begin{array}{l}
		T_{E}^{(1)}=\left(\begin{array}{lll}
			-D_{E}^{(2)} & -D_{E}^{(3)} & I
		\end{array}\right), \\
		T_{E}^{(2)}=\left(\begin{array}{lll}
			I-D_{E}^{(2)} & -D_{E}^{(3)} & I
		\end{array}\right),
	\end{array}
	$$
	and
	$$
	T_{E}^{(3)}=\left(\begin{array}{lll}
		-D_{E}^{(2)} & I-D_{E}^{(3)} & I
	\end{array}\right).
	$$

	Then, we can see
	\begin{align*}
		\left(\begin{array}{c}
			w_{1,E}-\sum\limits_{k=1}^{3} D_{E}^{(k)} w_{k,E}  \\
			w_{2,E}-\sum\limits_{k=1}^{3} D_{E}^{(k)} w_{k,E} \\
			w_{3,E}-\sum\limits_{k=1}^{3} D_{E}^{(k)} w_{k,E}	
		\end{array}\right)
		=
		\left(\begin{array}{c}
			w_{1,E}- \widehat{w}_E  \\
			w_{2,E}- \widehat{w}_E \\
			w_{3,E}- \widehat{w}_E	
		\end{array}\right)
		=
		\left(\begin{array}{cc}
			-D_{E}^{(2)} & -D_{E}^{(3)} \\
			I-D_{E}^{(2)} & -D_{E}^{(3)} \\
			-D_{E}^{(2)} & I-D_{E}^{(3)}
		\end{array}\right)
		\left(\begin{array}{c}
			\check{w}_{2,E}\\
			\check{w}_{3,E}	
		\end{array}\right).
	\end{align*}

	Using the new unknowns, we can rewrite \eqref{estimate-key-E-2} into
	\begin{align}\label{equ-5-20}
		\left(\begin{array}{l}
			\check{w}_{2 E} \\
			\check{w}_{3 E}
		\end{array}\right)^{T} M_{E}\left(\begin{array}{c}
			\check{w}_{2,E} \\
			\check{w}_{3,E}
		\end{array}\right),
	\end{align}
	where
	$$
	M_{E}=\left(\begin{array}{cc}
		-D_{E}^{(2)} & -D_{E}^{(3)} \\
		I-D_{E}^{(2)} & -D_{E}^{(3)} \\
		-D_{E}^{(2)} & I-D_{E}^{(3)}
	\end{array}\right)^{T}\left(\begin{array}{ccc}
		B_{E}^{(1)} & & \\
		& B_{E}^{(2)} & \\
		& & B_{E}^{(3)}
	\end{array}\right)\left(\begin{array}{cc}
		-D_{E}^{(2)} & -D_{E}^{(3)} \\
		I-D_{E}^{(2)} & -D_{E}^{(3)} \\
		-D_{E}^{(2)} & I-D_{E}^{(3)}
	\end{array}\right) .
	$$

	The right-hand side of \eqref{estimate-key-E-1} is greater than
	\begin{align}\label{estimate-key-E-3}
		\sum_{i=1}^3 \widetilde{w}_i^T \widetilde{B}^{(i)} \widetilde{w}_i,
	\end{align}
	where $\widetilde{w}_i := (w_{i,E}, w_{i,H})$
	represents the restriction of $w_i$ to the unknowns inside the edge $E$ and the prior selected primal unknowns (vertices and face primal unknowns). $\widetilde{B}_i$  is the Schur complement matrix obtained after eliminating the unknowns other than those inside $E$ and the prior selected primal unknowns from $B^{(i)}$, and
	$$
	\widetilde{B}^{(i)}=\left(\begin{array}{ll}
		\widetilde{B}_{E E}^{(i)} & \widetilde{B}_{E H}^{(i)} \\
		\widetilde{B}_{H E}^{(i)} & \widetilde{B}_{H H}^{(i)}
	\end{array}\right),
	$$
	here the notation $E$ denotes the blocks corresponding to the unknowns on $E$ and $H$ to the unknowns on the prior selected primal unknowns.
	
	From this, \eqref{estimate-key-E-3}  can be further equivalently represented as
	\begin{align}\nonumber
		&\left(
		\begin{array}{c}
			w_{1,E} \\
			w_{1,H}   \\
			w_{2,E}\\
			w_{2,H} \\
			w_{3,E}\\
			w_{3,H}
		\end{array}
		\right)^T
		\left(
		\begin{array}{cccccc}
			\widetilde{B}_{EE}^{(1)} & \widetilde{B}_{EH}^{(1)} & & & & \\
			\widetilde{B}_{HE}^{(1)} & \widetilde{B}_{HH}^{(1)} & & & & \\
			&&\widetilde{B}_{EE}^{(2)} & \widetilde{B}_{EH}^{(2)} & & \\
			&&\widetilde{B}_{HE}^{(2)} & \widetilde{B}_{HH}^{(2)} & & \\
			&&&&\widetilde{B}_{EE}^{(3)} & \widetilde{B}_{EH}^{(3)} \\
			&&&&\widetilde{B}_{HE}^{(3)} & \widetilde{B}_{HH}^{(3)} \\
		\end{array}
		\right)
		\left(
		\begin{array}{c}
			w_{1,E} \\
			w_{1,H}   \\
			w_{2,E}\\
			w_{2,H} \\
			w_{3,E}\\
			w_{3,H}
		\end{array}
		\right) \\\nonumber
		= &
		\left(
		\begin{array}{c}
			w_{1,E} \\
			w_{2,E}\\
			w_{3,E}\\\hline
			w_{1,H}   \\
			w_{2,H} \\
			w_{3,H}
		\end{array}
		\right)^T
		\left(
		\begin{array}{ccc|ccc}
			\widetilde{B}_{EE}^{(1)} &                     &                    &\widetilde{B}_{EH}^{(1)}
			&                     & \\
			& \widetilde{B}_{EE}^{(2)} &                    &                     &\widetilde{B}_{EH}^{(2)}  &\\
			&                     &\widetilde{B}_{EE}^{(3)} &                    &                     & \widetilde{B}_{EH}^{(3)} \\\hline
			\widetilde{B}_{HE}^{(1)} &                     &                    &\widetilde{B}_{HH}^{(1)}
			&                     & \\
			&\widetilde{B}_{HE}^{(2)}  &                    &
			& \widetilde{B}_{HH}^{(2)} & \\
			&                     &\widetilde{B}_{HE}^{(3)} &
			&                     & \widetilde{B}_{HH}^{(3)} \\
		\end{array}
		\right)
		\left(
		\begin{array}{c}
			w_{1,E} \\
			w_{2,E}\\
			w_{3,E}\\\hline
			w_{1,H}   \\
			w_{2,H} \\
			w_{3,H}
		\end{array}
		\right)\\\label{equ-5-18}
		=& \left(\begin{array}{l}
			w_{1,E} \\
			w_{2,E} \\
			w_{3,E} \\
			\widehat{w}_{H}
		\end{array}\right)^{T}
		\left(\begin{array}{llll}
			\widetilde{B}_{EE}^{(1)} & & &
			\dot{\widetilde{B}}_{EH}^{(1)} \\
			& \widetilde{B}_{EE}^{(2)} & & \dot{\widetilde{B}}_{EH}^{(2)} \\
			&& \widetilde{B}_{EE}^{(3)} &  \dot{\widetilde{B}}_{EH}^{(3)} \\
			\dot{\widetilde{B}}_{HE}^{(1)} & \dot{\widetilde{B}}_{HE}^{(2)} & \dot{\widetilde{B}}_{HE}^{(3)} & \widehat{B}_{H H}
		\end{array}\right)
		\left(\begin{array}{l}
			w_{1,E} \\
			w_{2,E} \\
			w_{3,E} \\
			\widehat{w}_{H}
		\end{array}\right)
	\end{align}
	where
	$$\dot{\widetilde{B}}_{EH}^{(1)} = (\widetilde{B}_{EH}^{(1)},{\bf 0},{\bf 0}),~
	\dot{\widetilde{B}}_{EH}^{(2)} = ({\bf 0},\widetilde{B}_{EH}^{(2)},{\bf 0}),~
	\dot{\widetilde{B}}_{EH}^{(3)} = ({\bf 0},{\bf 0},\widetilde{B}_{EH}^{(3)}),$$
	$$\dot{\widetilde{B}}_{HE}^{(1)} =
	\left(\begin{array}{c}
		\widetilde{B}_{HE}^{(1)} \\
		{\bf 0} \\
		{\bf 0}
	\end{array}
	\right), ~
	\dot{\widetilde{B}}_{HE}^{(2)} =
	\left(\begin{array}{c}
		{\bf 0}\\
		\widetilde{B}_{HE}^{(2)} \\
		{\bf 0}
	\end{array}
	\right),~
	\dot{\widetilde{B}}_{HE}^{(3)} =
	\left(\begin{array}{c}
		{\bf 0}\\
		{\bf 0}\\
		\widetilde{B}_{HE}^{(3)}
	\end{array}
	\right),~
	\widehat{w}_{H} =
	\left(\begin{array}{c}
		w_{1,H}   \\
		w_{2,H} \\
		w_{3,H}
	\end{array}
	\right),
	$$
	$$
	\widehat{B}_{H H} =	
	\left(
	\begin{array}{ccc}
		\widetilde{B}_{HH}^{(1)} 	&                     & \\
		& \widetilde{B}_{HH}^{(2)} & \\
		&                     & \widetilde{B}_{HH}^{(3)}
	\end{array}
	\right).
	$$
	
	From the transformation of basis  \eqref{equ-trans}, we can see that
	\begin{align*}
		\left(
		\begin{array}{c}
			w_{1,E} \\
			w_{2,E} \\
			w_{3,E} \\\hline
			\widehat{w}_H
		\end{array}
		\right)
		=
		\left(
		\begin{array}{c|c}
			T_{E}^{(1)} & \\
			T_{E}^{(2)} & \\
			T_{E}^{(3)} & \\\hline
			& I
		\end{array}
		\right)
		\left(
		\begin{array}{c}
			\check{w}_{2,E} \\
			\check{w}_{3,E} \\
			\widehat{w}_{E} \\\hline
			\widehat{w}_H
		\end{array}
		\right)
	\end{align*}
	and then we can rewrite the right-hand side of \eqref{equ-5-18} into
	\begin{align*}
		&	\left(
		\begin{array}{c}
			\check{w}_{2,E} \\
			\check{w}_{3,E} \\
			\widehat{w}_{E} \\\hline
			\widehat{w}_H
		\end{array}
		\right)^{T}
		\left(
		\begin{array}{c|c}
			T_{E}^{(1)} & \\
			T_{E}^{(2)} & \\
			T_{E}^{(3)} & \\\hline
			& I
		\end{array}
		\right)^T
		\left(\begin{array}{lll|l}
			\widetilde{B}_{EE}^{(1)} & & &\dot{\widetilde{B}}_{EH}^{(1)} \\
			& \widetilde{B}_{EE}^{(2)} & & \dot{\widetilde{B}}_{EH}^{(2)} \\
			&& \widetilde{B}_{EE}^{(3)} &  \dot{\widetilde{B}}_{EH}^{(3)} \\\hline
			\dot{\widetilde{B}}_{HE}^{(1)} & \dot{\widetilde{B}}_{HE}^{(2)} & \dot{\widetilde{B}}_{HE}^{(3)} & \widehat{B}_{H H}
		\end{array}\right)
		\left(
		\begin{array}{c|c}
			T_{E}^{(1)} & \\
			T_{E}^{(2)} & \\
			T_{E}^{(3)} & \\\hline
			& I
		\end{array}
		\right)
		\left(
		\begin{array}{c}
			\check{w}_{2,E} \\
			\check{w}_{3,E} \\
			\widehat{w}_{E} \\\hline
			\widehat{w}_H
		\end{array}
		\right) \\
		=&
		\left(
		\begin{array}{c}
			\check{w}_{2,E} \\
			\check{w}_{3,E} \\
			\widehat{w}_{E} \\\hline
			\widehat{w}_H
		\end{array}
		\right)^{T}
		\left(\begin{array}{c|c}
			\sum_{i=1}^3 (T_{E}^{(i)})^T \widetilde{B}_{EE}^{(i)} T_{E}^{(i)} & \sum_{i=1}^3 (T_{E}^{(i)})^T \dot{\widetilde{B}}_{EH}^{(i)} \\\hline
			\sum_{i=1}^3 \dot{\widetilde{B}}_{HE}^{(i)} T_{E}^{(i)} & \widehat{B}_{HH}
		\end{array}\right)
		\left(
		\begin{array}{c}
			\check{w}_{2,E} \\
			\check{w}_{3,E} \\
			\widehat{w}_{E} \\\hline
			\widehat{w}_H
		\end{array}
		\right)
	\end{align*}
	
	Let
	$$
	\widetilde{B}_{E}=		\left(\begin{array}{c|c}
		\sum_{i=1}^3 (T_{E}^{(i)})^T \widetilde{B}_{EE}^{(i)} T_{E}^{(i)} & \sum_{i=1}^3 (T_{E}^{(i)})^T \dot{\widetilde{B}}_{EH}^{(i)} \\\hline
		\sum_{i=1}^3 \dot{\widetilde{B}}_{HE}^{(i)} T_{E}^{(i)} & \widehat{B}_{HH}
	\end{array}\right).
	$$
	We can obtain a Schur complement of $\widetilde{B}_{E}$ by eliminating blocks corresponding to $\left(\widehat{w}_{E}, \widehat{w}_{H}\right)$ and denote it by $\widetilde{\widetilde{B}}_{E}$.
	
	From this, and it is easy to know that in order to make the estimation formula \eqref{estimate-key-E-1} hold, it suffices to make the following estimate holds under certain primal constraints on $E$
	\begin{align}\label{estimate-key-E-4}
		\left(\begin{array}{l}
			\check{w}_{2,E} \\
			\check{w}_{3,E}
		\end{array}\right)^{T} M_{E}\left(\begin{array}{c}
			\check{w}_{2,E} \\
			\check{w}_{3,E}
		\end{array}\right) \le C
		\left(\begin{array}{l}
			\check{w}_{2,E} \\
			\check{w}_{3,E}
		\end{array}\right)^{T} \widetilde{\widetilde{B}}_{E}\left(\begin{array}{c}
			\check{w}_{2,E} \\
			\check{w}_{3,E}
		\end{array}\right).
	\end{align}
	
	Introduce a generalized eigenvalue problem
	\begin{align}\label{equ-5-21}
		M_{E} \check{v}_E = \lambda \widetilde{\widetilde{B}}_{E} \check{v}_E.
	\end{align}
	Suppose the number of unknowns interior to $E$ is $n_E$. For $k = 1,2,\cdots, 2n_E$, let $\lambda_{E,k}$ be the eigenvalue of the generalized eigenvalue problem \eqref{equ-5-21}, and its corresponding eigenvector be $\check{v}_{E,k}$. For a given threshold $\Theta_E\ge 1$, assume that
	\begin{align}\label{equ-5-21-lambda}
		\lambda_{E,1} \ge \lambda_{E,2} \ge \cdots \ge \lambda_{E,n_{\Pi}^{E}} \ge \Theta_E {\color{blue}>} \lambda_{E,n_{\Pi}^{E} + 1} \ge \lambda_{E,n_{\Pi}^{E} + 2} \ge \cdots
		\lambda_{E,2n_{E}}.
	\end{align}
	
	Denote $N_E = \{1,2,\cdots,n_{\Pi}^E\}$,
	we then enforce the following constraints on the unknowns $w_{k,E}$,
	\begin{align}\label{equ-5-22}
		\left(M_{E} \check{v}_{E,l}\right)^{T}\left(\begin{array}{c}
			\check{w}_{2,E} \\
			\check{w}_{3,E}
		\end{array}\right)=0,~l\in N_E.
	\end{align}
	\begin{remark}
		Although the length of the eigenvectors obtained from the generalized eigenvalue problem \eqref{equ-5-21} is $2 n_E$,
		the basis transformation matrix associated with the edge $E$ of order $n_E$ can be constructed
		by using singular value decomposition (SVD) and QR decomposition methods,
		so that the primal constraint condition \eqref{equ-5-22} is satisfied.
		For specific details, we refer to \cite{KimWang20:1928}.
		
	\end{remark}
	
	With the above choice of adaptive primal unknowns for an edge $E$, we can obtain the desired bound:
	\begin{lemma}\label{lemma-E}
		For $w_i$, $i\in n(E)$, satisfying the adaptive primal constriants \eqref{equ-5-22} on the edge $E$, the following estimate holds,
		\begin{align*}
			\sum_{i \in n(E)}\left\langle B_{E}^{(i)} \sum_{k \in n(E)} D_{E}^{(k)}\left(w_{i, E}-w_{k, E}\right), \sum_{k \in n(E)} D_{E}^{(k)}\left(w_{i, E}-w_{k, E}\right)\right\rangle \le C  \Theta_E
			\sum_{i \in n(E)} \left\langle B^{(i)} w_{i}, w_{i}\right\rangle,
		\end{align*}	
		where $w_{i,E}$ denote the restriction of $w_i$ to the unknowns in the edge $E$ and $\Theta_E \ge 1$ is any given threshold.
	\end{lemma}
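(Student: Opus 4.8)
The plan is to follow the template already set by the proofs of Lemma~\ref{lemma-F} and Lemma~\ref{lemma-E-1}: reduce the assertion to the single matrix inequality \eqref{estimate-key-E-4}, and then settle \eqref{estimate-key-E-4} by a spectral argument built on the generalized eigenvalue problem \eqref{equ-5-21}. The reduction is essentially pre-assembled in the text preceding the lemma, so the first task is only to record which pieces combine. For the left-hand side, the equivalent rewriting \eqref{estimate-key-E-2} together with the change of unknowns \eqref{equ-trans} shows that the quantity to be bounded equals $(\check w_{2,E},\check w_{3,E})^{T} M_{E} (\check w_{2,E},\check w_{3,E})$, the form appearing in \eqref{equ-5-20}. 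For the right-hand side I would chain the two minimal-energy (Schur complement) estimates: first \eqref{estimate-key-E-3} bounds $\sum_{i\in n(E)}\langle B^{(i)}w_i,w_i\rangle$ from below by the reduced energy $\sum_i \widetilde w_i^{T}\widetilde B^{(i)}\widetilde w_i$, and then, after the basis transformation producing $\widetilde B_E$, eliminating the blocks associated with $(\widehat w_E,\widehat w_H)$ bounds this in turn from below by $(\check w_{2,E},\check w_{3,E})^{T}\widetilde{\widetilde B}_E(\check w_{2,E},\check w_{3,E})$. Consequently it suffices to prove \eqref{estimate-key-E-4}.

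For \eqref{estimate-key-E-4} I would argue as in Lemma~\ref{lemma-E-1}, but now on the transformed pair $\check w:=(\check w_{2,E},\check w_{3,E})^{T}$ and the pencil $(M_E,\widetilde{\widetilde B}_E)$. Writing $N_E^{\perp}:=\{n_{\Pi}^E+1,\dots,2n_E\}$ and using the biorthogonality of the eigenvectors of \eqref{equ-5-21} (the edge analogue of the conditions imposed for faces, namely $(\widetilde{\widetilde B}_E\check v_{E,l})^{T}\check v_{E,m}=0$ for $l\in N_E,\ m\in N_E^{\perp}$ and $(M_E\check v_{E,l})^{T}\check v_{E,m}=\delta_{lm}$), I would expand $\check w=\sum_k c_k\check v_{E,k}$. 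The primal constraints \eqref{equ-5-22}, which read $(M_E\check v_{E,l})^{T}\check w=0$ for $l\in N_E$, then force $c_l=0$ for every $l\in N_E$; hence $\check w$ lies entirely in the span of the dual eigenvectors $\{\check v_{E,k}:k\in N_E^{\perp}\}$, i.e.\ those with $\lambda_{E,k}<\Theta_E$ by \eqref{equ-5-21-lambda}.

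The final step is a Rayleigh-quotient estimate. From $M_E\check v_{E,k}=\lambda_{E,k}\widetilde{\widetilde B}_E\check v_{E,k}$ together with the normalization one obtains $\check w^{T}M_E\check w=\sum_{k\in N_E^{\perp}}c_k^{2}$ and $\check w^{T}\widetilde{\widetilde B}_E\check w=\sum_{k\in N_E^{\perp}}c_k^{2}/\lambda_{E,k}$; since $\lambda_{E,k}<\Theta_E$ on $N_E^{\perp}$, this yields $\check w^{T}M_E\check w\le\Theta_E\,\check w^{T}\widetilde{\widetilde B}_E\check w$, which is precisely \eqref{estimate-key-E-4}. Combining with the reduction of the first paragraph gives the claimed bound, the generic constant $C$ in the statement absorbing any bookkeeping factors introduced along the way.

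I expect the genuine difficulty to lie not in this spectral bookkeeping but in justifying the two reductions cleanly when the matrices involved are only positive semidefinite: the parallel-sum matrix $\widetilde B_E$ and its Schur complement $\widetilde{\widetilde B}_E$ may be singular, so the eigenvectors of \eqref{equ-5-21} must be organized (via the SVD/QR procedure of the Remark, following \cite{KimWang20:1928}) so that the biorthogonality relations and the expansion $\check w=\sum_k c_k\check v_{E,k}$ remain valid on the relevant subspace, and the pseudo-inverse entering the parallel sum must be handled so that the minimal-energy inequality \eqref{estimate-key-E-3} is legitimate. Verifying that the length-$2n_E$ eigenvectors indeed induce $n_E$ \emph{implementable} constraints on the original edge unknowns is the technical crux of the argument.
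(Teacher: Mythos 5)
Your proposal is correct and follows essentially the same route as the paper: reduce the lemma to the inequality \eqref{estimate-key-E-4} via the change of unknowns \eqref{equ-trans} and the minimal-energy/Schur-complement chain already assembled before the lemma, then settle \eqref{estimate-key-E-4} by expanding $\check{w}_E$ in the eigenvectors of \eqref{equ-5-21} and invoking the primal constraints \eqref{equ-5-22} together with the threshold \eqref{equ-5-21-lambda}. The only cosmetic difference is that you observe the constraints annihilate the primal coefficients outright and compute the Rayleigh quotient diagonally (yielding the clean constant $\Theta_E$), whereas the paper retains the primal component $(\check{w}_E)_{\Pi}$ and removes it through $M_E$-orthogonality and the positive semidefiniteness of $\widetilde{\widetilde{B}}_E$; your closing caveats about semidefiniteness and the SVD/QR handling match the paper's own remark deferring to \cite{KimWang20:1928}.
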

	\begin{proof}
		It suffices to prove \eqref{estimate-key-E-4}.
		
		Let $\check{w}_E$ denote $\left(\begin{array}{l}
			\check{w}_{2,E} \\
			\check{w}_{3,E}
		\end{array}\right)$, $V_{2E} = (\check{v}_{E,1},\check{v}_{E,2},\cdots,\check{v}_{E,n_{\Pi}^E})$, $V_{2E}^{\perp} = (\check{v}_{E,n_{\Pi}^E+1},\check{v}_{E,n_{\Pi}^E+2},\cdots,\check{v}_{E,2n_E})$.
		Then, there exists a $n_{\Pi}^E$-dimensional column vector $\check{w}_{E,\Pi}$ and a $2n_E - n_{\Pi}^E$-dimensional column vector $\check{w}_{E,\Delta}$ such that
		%
		$$\check{w}_{E} = (\check{w}_{E})_{\Delta} + (\check{w}_{E})_{\Pi},$$
		where 
		$$(\check{w}_{E})_{\Delta} = V_{2E}^{\perp} \check{w}_{E,\Delta},~(\check{w}_{E})_{\Pi} = V_{2E} \check{w}_{E,\Pi}.$$
		
		From this, by using the adaptive primal constraints \eqref{equ-5-22},
		\eqref{equ-5-21} and \eqref{equ-5-21-lambda}, we can see that
		\begin{align}\nonumber
			\left(\begin{array}{l}
				\check{w}_{2 E} \\
				\check{w}_{3 E}
			\end{array}\right)^{T} M_{E}\left(\begin{array}{c}
				\check{w}_{2,E} \\
				\check{w}_{3,E}
			\end{array}\right)
			= &\langle M_E ((\check{w}_{E})_{\Delta} + (\check{w}_{E})_{\Pi}),\check{w}_E \rangle\\\nonumber
			= &\langle M_E (\check{w}_{E})_{\Delta},\check{w}_E\rangle\\\nonumber
			\le & \Theta_{E} \langle \widetilde{\widetilde{B}}_{E} (\check{w}_{E})_{\Delta},\check{w}_E\rangle  \\\nonumber
			= & \Theta_{E} \langle \widetilde{\widetilde{B}}_{E} (\check{w}_{E})_{\Delta}, (\check{w}_{E})_{\Delta} + (\check{w}_{E})_{\Pi}\rangle\\\nonumber
			< & \Theta_{E} \langle \widetilde{\widetilde{B}}_{E} ((\check{w}_{E})_{\Delta}) + (\check{w}_{E})_{\Pi} , (\check{w}_{E})_{\Delta} + (\check{w}_{E})_{\Pi} \rangle \\\label{conclusion-1}
			=&C \Theta_{E} 	
			\left(\begin{array}{l}
				\check{w}_{2 E} \\
				\check{w}_{3 E}
			\end{array}\right)^{T} \widetilde{\widetilde{B}}_{E}\left(\begin{array}{c}
				\check{w}_{2,E} \\
				\check{w}_{3,E}
			\end{array}\right)
		\end{align}
		where \eqref{conclusion-1} can be obtained by utilizing the symmetric positive definiteness of $\widetilde{\widetilde{B}}_{E}$.
	\end{proof}

	By using Lemma \eqref{lemma-F} and Lemma \eqref{lemma-E},
	the following lemma can be obtained.
	\begin{lemma}\label{assum}
		Let $\Theta$ denote the maximum value between $\Theta_{E}$ and $\Theta_{F}$,
		we have
		\begin{align*}
			\langle \widetilde{B} (P_D \widetilde{w}), P_D \widetilde{w} \rangle \le C \Theta \langle \widetilde{B}\widetilde{w}, \widetilde{w}\rangle,~\forall \widetilde{w} \in \widetilde{W} \backslash \{0\},
		\end{align*}
		where the constant $C$ is only dependent on the number of faces and edges per subdomain and the number of subdomains sharing an edge.
	\end{lemma}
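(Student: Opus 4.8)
The plan is to assemble the global bound from the per-face and per-edge estimates already furnished by Lemma \ref{lemma-F} and Lemma \ref{lemma-E}. Since $\widetilde{B} = \sum_{i=1}^N (\widetilde{R}^{(i)})^T B^{(i)} \widetilde{R}^{(i)}$, writing $(P_D\widetilde{w})_i := \widetilde{R}^{(i)} P_D\widetilde{w}$ and $w_i := \widetilde{R}^{(i)}\widetilde{w}$ immediately gives
$$
\langle \widetilde{B}(P_D\widetilde{w}), P_D\widetilde{w}\rangle = \sum_{i=1}^N \langle B^{(i)}(P_D\widetilde{w})_i, (P_D\widetilde{w})_i\rangle, \qquad \langle \widetilde{B}\widetilde{w}, \widetilde{w}\rangle = \sum_{i=1}^N \langle B^{(i)} w_i, w_i\rangle.
$$
The first key observation is that, because $P_D = I - E_D$ with $E_D = \widetilde{R}\widetilde{R}^T\widetilde{D}$ and the vertex (and prior selected primal) unknowns are continuous, the interface values of $(P_D\widetilde{w})_i$ vanish at all primal unknowns and, on the interior of each face $F$ and edge $E$ of $\partial\Omega_i$, reduce to the jumps
$$
j_{i,F} := w_{i,F} - \sum_{k\in n(F)} D_F^{(k)} w_{k,F} = \sum_{k\in n(F)} D_F^{(k)}(w_{i,F}-w_{k,F}), \qquad j_{i,E} := \sum_{k\in n(E)} D_E^{(k)}(w_{i,E}-w_{k,E}),
$$
where the second form uses the partition of unity \eqref{partition-unity-cond}.

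Next I would localize the subdomain energy to single faces and edges. Because the interiors of distinct faces and edges are disjoint and exclude the primal unknowns, the interface vector $(P_D\widetilde{w})_i$ splits exactly as a sum of pieces, each supported on the interior of a single face or edge. Applying Cauchy--Schwarz with the symmetric positive definite $B^{(i)}$, and noting that the $B^{(i)}$-energy of a piece supported only on the interior of $X$ ($X=F$ or $E$) collapses to the block form $\langle B_X^{(i)}\cdot,\cdot\rangle$, I obtain
$$
\langle B^{(i)}(P_D\widetilde{w})_i, (P_D\widetilde{w})_i\rangle \le C_i \Bigl( \sum_{F\subset\partial\Omega_i} \langle B_F^{(i)} j_{i,F}, j_{i,F}\rangle + \sum_{E\subset\partial\Omega_i} \langle B_E^{(i)} j_{i,E}, j_{i,E}\rangle \Bigr),
$$
where $C_i$ equals the number of faces and edges of $\Omega_i$.

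I would then sum over $i$ and interchange the order of summation, which turns $\sum_i \sum_{X\subset\partial\Omega_i}$ into $\sum_X \sum_{i\in n(X)}$, so that the resulting inner sums are precisely the left-hand sides of Lemma \ref{lemma-F} and Lemma \ref{lemma-E}. These lemmas bound them by $2\Theta_F\sum_{i\in n(F)}\langle B^{(i)}w_i,w_i\rangle$ and $C\Theta_E\sum_{i\in n(E)}\langle B^{(i)}w_i,w_i\rangle$, respectively. Regrouping the right-hand side back by subdomain, $\sum_X\sum_{i\in n(X)}\langle B^{(i)}w_i,w_i\rangle = \sum_i (\#\{\text{faces and edges of }\Omega_i\})\langle B^{(i)}w_i,w_i\rangle$, and invoking $\langle\widetilde{B}\widetilde{w},\widetilde{w}\rangle = \sum_i\langle B^{(i)}w_i,w_i\rangle$ together with $\Theta=\max(\Theta_E,\Theta_F)$ delivers the asserted inequality, the constant $C$ collecting only the maximal number of faces and edges per subdomain and the maximal number of subdomains sharing an edge.

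The step I expect to be the main obstacle is the localization: one must verify rigorously that the interface trace of $(P_D\widetilde{w})_i$ carries no contribution at the primal unknowns (so that vertices drop out), and that the single-face/single-edge decomposition, together with the identification of the $B^{(i)}$-energy of an interior-of-$X$-supported piece with the block form $\langle B_X^{(i)}\cdot,\cdot\rangle$, introduces only the combinatorial factor $C_i$. This relies essentially on the partition of unity \eqref{partition-unity-cond} and the block structure of $B^{(i)}$; once it is in place, the remainder is bookkeeping and a direct appeal to the two preceding lemmas.
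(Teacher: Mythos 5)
Your proposal is correct and follows essentially the same route as the paper: the paper gives no explicit proof of this lemma, merely asserting that it "can be obtained" from Lemma \ref{lemma-F} and Lemma \ref{lemma-E} (following the standard assembly argument of the cited adaptive BDDC literature), and your argument supplies exactly that assembly — the vanishing of $P_D\widetilde{w}$ at primal (vertex) unknowns, the exact splitting of $(P_D\widetilde{w})_i$ into face-interior and edge-interior pieces whose $B^{(i)}$-energies collapse to the $B_F^{(i)}$ and $B_E^{(i)}$ block forms, Cauchy--Schwarz with the combinatorial factor, and the interchange of the sums $\sum_i\sum_{X\subset\partial\Omega_i}=\sum_X\sum_{i\in n(X)}$ so that the two lemmas apply. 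The resulting constant depends only on the number of faces and edges per subdomain and (through Lemma \ref{lemma-E}) on the number of subdomains sharing an edge, matching the statement.
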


	Let $R^{(i)}$ be the restriction operator from $\widehat{W}$ to $W_i$, and introduce $$\widehat{B} = \sum\limits_{i=1}^N (R^{(i)})^T B^{(i)} R^{(i)}.$$	
	Denote $T := M_{BDDC}^{-1} \widetilde{R}^T \widetilde{S} \widetilde{R}$.
	Following the Theorem 7.15 in \cite{TuLi08:25}, and by using Lemma \ref{assum}, the convergence rate of the PGMRES algorithm can be derived and satisfies the following theorem.
	\begin{theorem}\label{theorem}
		There exists a positive constant $C$, which is independent of the subdomain diameter $H$ and the element size $h$, such that
		\begin{align*}
			c_1 \langle \widehat{B} \widehat{w}, \widehat{w} \rangle \le \langle \widehat{B} \widehat{w}, T \widehat{w}\rangle,~~
			\langle \widehat{B}(T \widehat{w}), T\widehat{w} \rangle \le C_2 \langle \widehat{B} \widehat{w}, \widehat{w}\rangle.
		\end{align*}
		Here
		$$
		c_1 = 1 - CH \frac{H}{h} \mu(H,h) \Theta,~~C_2 = C \Theta^2,
		$$
		where $\mu(H, h) = 1+log(H/h)$.
	\end{theorem}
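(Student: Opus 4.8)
The plan is to follow the abstract GMRES convergence framework of Theorem~7.15 in \cite{TuLi08:25}, whose hypotheses are precisely a stable-decomposition bound on the jump operator measured in the symmetric $\widetilde{B}$-inner product. The strategy is therefore to verify that hypothesis and then read off the two field-of-values estimates. First I would observe that Lemma~\ref{assum} supplies exactly the required bound, namely $\langle \widetilde{B}(P_D\widetilde{w}), P_D\widetilde{w}\rangle \le C\Theta\,\langle \widetilde{B}\widetilde{w},\widetilde{w}\rangle$, with the constant $C\Theta$ assembled from the face estimate (Lemma~\ref{lemma-F}) and the edge estimate (Lemma~\ref{lemma-E}). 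I would then express $T = M_{BDDC}^{-1}\widetilde{R}^T\widetilde{S}\widetilde{R}$ through the averaging operator $E_D = \widetilde{R}\widetilde{R}^T\widetilde{D}$ and its complement $P_D = I - E_D$, and split the Schur complement into its symmetric part (built from the $B^{(i)}$) and its skew-symmetric part (built from the $Z^{(i)}$). The symmetric part is governed entirely by Lemma~\ref{assum}, while the skew-symmetric part produces the $H$-dependent correction appearing in $c_1$.

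For the upper bound $\langle \widehat{B}(T\widehat{w}), T\widehat{w}\rangle \le C_2\langle \widehat{B}\widehat{w},\widehat{w}\rangle$, note that the left-hand side is just $\|T\widehat{w}\|_{\widehat{B}}^2$. I would bound the $\widehat{B}$-operator norm of $T$ using Lemma~\ref{assum}: since $\|P_D\widetilde{w}\|_{\widetilde{B}}^2 \le C\Theta\,\|\widetilde{w}\|_{\widetilde{B}}^2$ controls $E_D = I - P_D$, the deluxe-scaling boundedness and the relation between $T$ and $E_D$ give a bound on $\|T\|_{\widehat{B}}$ that scales with $\Theta$. Squaring this operator-norm estimate, as forced by the norm-squared form of the left-hand side, yields the stated boundedness constant $C_2 = C\Theta^2$.

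For the lower bound $c_1\langle\widehat{B}\widehat{w},\widehat{w}\rangle \le \langle\widehat{B}\widehat{w}, T\widehat{w}\rangle$, the leading term $1$ arises because, restricted to the symmetric part, the BDDC preconditioner reproduces $\widehat{B}$ up to the jump $P_D$, so that $\langle\widehat{B}\widehat{w},T\widehat{w}\rangle \ge \langle\widehat{B}\widehat{w},\widehat{w}\rangle$ minus a skew-symmetric remainder. That remainder is controlled by estimating the advection form $z_i(\cdot,\cdot)$ on each subdomain: a Poincar\'e-type scaling contributes the explicit factor $H$, an inverse inequality trades a gradient for $H/h$, the logarithmic trace/extension estimate produces $\mu(H,h) = 1+\log(H/h)$, and Lemma~\ref{assum} absorbs the interface jump with the factor $\Theta$. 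Collecting these bounds the remainder by $CH\frac{H}{h}\mu(H,h)\Theta\,\langle\widehat{B}\widehat{w},\widehat{w}\rangle$, which gives $c_1 = 1 - CH\frac{H}{h}\mu(H,h)\Theta$, positive once $H$ is sufficiently small.

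The hard part will be the treatment of the skew-symmetric term in the lower bound. One must track carefully how the advection bilinear form interacts with the averaging operator across the interface $\Gamma$, confirm that its contribution carries the genuine factor $H$ (and not merely $H/h$), so that the real part of the field of values stays bounded away from zero, and check that the adaptive primal constraints on faces and edges—through Lemma~\ref{assum}—absorb the symmetric-part estimates uniformly in the coefficient variation. Once these two inequalities are established, the convergence statement follows directly from the abstract estimate of \cite{TuLi08:25}, and the remaining deluxe-scaling bounds are routine.
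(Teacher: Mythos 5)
Your proposal is correct and follows essentially the same route as the paper: the paper's entire proof consists of invoking the abstract framework of Theorem~7.15 in \cite{TuLi08:25} together with the jump-operator bound of Lemma~\ref{assum}, which is precisely the verification-plus-citation structure you describe. The additional detail you sketch (the symmetric/skew-symmetric splitting, the origin of the factors $H$, $H/h$, $\mu(H,h)$, and $\Theta$ in $c_1$, and the squared bound giving $C_2 = C\Theta^2$) is just the interior of the Tu--Li argument that the paper leaves implicit.
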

	
	Define $\|v\|_{\widehat{B}}^2 = \langle \widehat{B}v, v\rangle$. From Theorem \ref{theorem} and the result in \cite{EisenstatElman83:345}, we know that the iterative convergence rate of the GMRES algorithm can be bounded by
	$$
	\frac{\|r_m\|_{\widehat{B}}}{\|r_0\|_{\widehat{B}}} \le \left(1 - \frac{c_1^2}{C_2}\right)^{m/2},
	$$
	where $r_m$ is the residual at step $m$ of the GMRES iteration applied to the operator $M_{BDDC}^{-1}$.

	\section{Numerical results}\label{sec:4}
	
	%
	In this section, we present numerical experiments to verify the effectiveness of the proposed adaptive BDDC method
	and compare it with the adaptive BDDC method proposed in \cite{PengShu21:184}.
	For ease of reference, we denote the adaptive BDDC method from \cite{PengShu21:184} as \textbf{ABDDC-OLD},
	and the method proposed in this paper as \textbf{ABDDC-NEW}.
	
	In the following tables, $(\text{pnumF}, \text{pnumE})$ denotes the number of primal unknowns on all faces and edges, respectively.
	GMRES iterations preconditioned with the adaptive BDDC methods are used to solve the discrete linear systems
	arising from the advection-diffusion problems.
	The iterations are terminated when the $L^2$-norm of the residual falls below $10^{-8}$ or the iteration count reaches $300$.
	In all experiments, we fix $\Theta_F = 1+\log(m)$ and $\Theta_E = 10$.
	These methods are implemented in MATLAB and run on a machine with an Intel(R) Xeon(R) Silver 4110 CPU @ 2.10GHz.
	
	\begin{example}\cite{GerardoTallec04:745}\label{example-2}
		Consider the model problem \eqref{model-AD-equation}, and we set $\Omega = (-0.5,0.5) \times (-0.5,0.5) \times (0,1)$, the reaction coefficient $c = 1$, the velocity field $\boldsymbol{a} = (-2\pi y, 2\pi x, \sin(2\pi x))$, the source term $f = 0$ and the boundary condition is given by
		\begin{align*}
			u = \left\{\begin{array}{ll}
				1, & z = 0;\\
				0, & \mbox{otherwise}.
			\end{array}
			\right.
		\end{align*}
	\end{example}
	
	We first consider numerical experiments on uniform meshes.
	The domain $\Omega$ is decomposed into $8$ uniform cubic subdomains,
	numbered in an anticlockwise helicoidal manner
	from $\Omega_1=(-0.5,0) \times (-0.5,0) \times (0,0.5)$ to $\Omega_8 = (-0.5,0) \times (0,0.5) \times (0.5,1)$.
	Let the viscosity coefficient $\nu$ be piecewise constant and $\nu_i = \nu|_{\Omega_i}$ for $i=1,\cdots,8$. We test and compare the two adaptive methods with the following tests:
	\begin{itemize}
		\item Test 1: $\nu_1 = \nu_4 = \nu_5 = \nu_8$, and $\nu_2 = \nu_3 = \nu_6 = \nu_7$, where $\bar{\Omega}_1 \cup \bar{\Omega}_4 \cup \bar{\Omega}_5 \cup \bar{\Omega}_8 = [-0.5,0] \times [-0.5,0.5] \times [0,1]$ and $\bar{\Omega}_2 \cup \bar{\Omega}_3 \cup \bar{\Omega}_6 \cup \bar{\Omega}_7 = [0,0.5] \times [-0.5,0.5] \times [0,1]$.
		
		\item Test 2: $\nu_1 = \nu_5 = \nu_6 = \nu_8$, and $\nu_2 = \nu_3 = \nu_4 = \nu_7$.
		
		\item Test 3: $\nu_1 = \nu_3 = \nu_6 = \nu_8$, and $\nu_2 = \nu_4 = \nu_5 = \nu_7$.
	\end{itemize}
	
	Each subdomain is partitioned into $m^3$ uniform cubes,
	and each cube is further divided into six uniform tetrahedra.
	
	Firstly, let $m = 6$. Table~\ref{ex-table-4} presents the iteration numbers and the number of primal unknowns on all faces and edges
	for both methods under various viscosities.
	From this table, we observe that both methods perform well,
	but the number of primal unknowns on all edges for the \textbf{ABDDC-NEW} method is slightly lower than that of \textbf{ABDDC-OLD} in Test 2 and Test 3.
	
	In Table~\ref{ex-table-5}, we set $\nu_1 = 1$ and $\nu_2 = 10^{-7}$.
	Both methods remain robust for different values of $m$,
	and the newly proposed method leads to a significant reduction in the number of primal unknowns on all edges.
	
	\begin{table}[H]
		\centering \caption{The results for various viscosities ($m=6$).}
		\label{ex-table-4}\vskip 0.1cm
		{\footnotesize
			\begin{tabular}{{|c|c|c|c|c|c|c|}}\hline
				&\multicolumn{3}{|c|}{ABDDC-OLD}           & \multicolumn{3}{|c|}{ABDDC-NEW}     \\\hline
				$\nu_1, \nu_2$     &Test 1   &Test 2       &Test 3     &Test 1     &Test 2       &Test 3    \\\hline
				$10^{-1}, 10^{-5}$ &10(0,3)     &9(0,6)       &9(0,9)
				&9(0,9)  &9(0,6)      &9(0,6)\\\hline
				$10^{-1}, 10^{-6}$ &10(0,3)     &9(0,6)       &9(0,9)
				&9(0,9)  &9(0,6)      &9(0,6)\\\hline
				$10^{-1}, 10^{-7}$ &10(0,3)     &9(0,6)       &9(0,9)
				&9(0,9)  &9(0,6)      &9(0,6)\\\hline
				$10^{3},  10^{-3}$ &9(6,21)    &9(5,22)      &3(4,30)
				&11(6,14)  &12(5,8)  &9(4,19)\\\hline
				$1,       10^{-7}$ &9(4,20)    &9(3,17)      &7(0,30)
				&10(4,9)  &11(3,6)       &10(0,6)\\\hline
			\end{tabular}
		}
	\end{table}

	\begin{table}[H]
		\centering \caption{The results for different number of subdomain problem size ($\nu_1 = 1, \nu_2 = 10^{-7}$)}
		\label{ex-table-5}\vskip 0.1cm
		{\footnotesize
			\begin{tabular}{{|c|c|c|c|c|c|c|}}\hline
				&\multicolumn{3}{|c|}{ABDDC-OLD}           & \multicolumn{3}{|c|}{ABDDC-NEW}     \\\hline
				$m$ &Test 1   &Test 2       &Test 3     &Test 1     &Test 2       &Test 3    \\\hline
				4  &8(4,12)      &9(3,10)       &7(0,18)
				&9(4,3)   &10(3,3)     &9(0,6)\\\hline
				8  &10(4,28)     &10(3,24)      &7(0,42)
				&11(4,10) &12(3,7)     &15(0,6)  \\\hline
				16 &12(6,61)     &11(5,55)      &8(4,90)
				&14(6,24)  &16(5,12)      &16(4,9)  \\\hline
			\end{tabular}
		}
	\end{table}
	
	Secondly, the numerical results of the two methods are tested for highly varying and random viscosity $\nu(x) \in (10^{-3}, 10^3)$ in Table  \ref{ex-table-random}. A similar performance can also be observed.
	\begin{table}[H]
		\centering \caption{The results for random viscosity ($\nu(x) = 10^r, r \in (-3, 3)$).}
		\label{ex-table-random}\vskip 0.1cm
		{\footnotesize
			\begin{tabular}{{|c|c|c|}}\hline
				$n(m)$     &ABDDC-OLD          &ABDDC-NEW     \\\hline
				2(4)       &10(4,8)     &11(4,0)\\\hline
				2(8)       &12(14,33)   &15(14,0)\\\hline
				2(12)      &13(15,51)   &16(15,9)\\\hline
				3(4)       &13(18,60)   &17(18,0) \\\hline
				3(8)       &17(64,201)  &25(64,18)\\\hline
				3(12)      &18(72,301)  &24(72,54)\\\hline
				4(4)       &15(52,187)  &24(52,18) \\\hline
				4(8)       &19(181,593) &36(181,112)\\\hline
				4(12)      &19(181,909) &30(181,309) \\\hline
			\end{tabular}
		}
	\end{table}
	
	Next, we present numerical results for irregular subdomain partitions.
	Using Metis, we first generate an initial tetrahedral mesh with a given mesh size $h$.
	Based on the principle of load balancing, we then aggregate the elements to obtain the irregular subdomain partitions.
	
	We also test the two methods for highly varying and random viscosity, $\nu(x) \in (10^{-3}, 10^3)$.
	Table~\ref{ex-table-cube} shows the iteration counts and the number of primal unknowns.
	It can be observed from the tables that although the iteration counts of the new method have increased,
	the number of primal unknowns on all edges for the new method is almost zero.
	This significantly reduces the size of the coarse space.
	
	Furthermore, Tables~\ref{ex-table-cube-time1} and \ref{ex-table-cube-time2} present the total times for solving these Schur complement systems using GMRES preconditioned by \textbf{ABDDC-OLD} and \textbf{ABDDC-NEW}, respectively.
	The times for the setup and solve phases of the generalized eigenvalue problem on edges are shown in parentheses.
	It can be seen that although the new method exhibits a slight increase in the number of iterations,
	the overall computational time is significantly reduced, demonstrating that the new method has a clear advantage in efficiency.

	\begin{table}[H]
		\centering \caption{Irregular subdomain partitions} 
	\label{ex-table-cube}\vskip 0.1cm
	{\footnotesize
		\begin{tabular}{{|c|c|c|c|c|c|c|}}\hline
			&\multicolumn{3}{|c|}{ABDDC-OLD}         & \multicolumn{3}{|c|}{ABDDC-NEW}     \\\hline
			\backslashbox[2cm]{$N$}{1/h} &20        &40           &80         &20         &40           &80    \\\hline
			8  &12(21,32)  &16(30,56)    &19(25,126) 	&17(21,0) &19(30,0)    &25(25,0)\\\hline
			27 &13(89,93)  &18(122,257)  &20(136,558)   &16(89,0) &21(122,0)   &26(136,0)\\\hline
			64 &12(161,110)&17(300,558)  &21(367,1177)  &15(161,2)&24(300,0)   &27(367,0) \\\hline
		\end{tabular}
	}
\end{table}

\begin{table}[H]
	\centering \caption{CPU times for ABDDC-OLD}
	\label{ex-table-cube-time1}\vskip 0.1cm
	{\footnotesize
		\begin{tabular}{{|c|c|c|c|}}\hline
			\backslashbox[2cm]{$N$}{1/h} &20        &40           &80             \\\hline
			8  &16.78(15.28,0.04)  &333.88(304.56,0.02)    &7816.22(6433.57,0.03) \\\hline
			27 &9.17(8.27,0.01)    &246.60(233.63,0.04)    &5179.07(4797.74,0.08)  \\\hline
			64 &3.82(2.94,0.01)   &166.72(157.59,0.05)    &4322.38(4142.04,0.14)   \\\hline
		\end{tabular}
	}
\end{table}

\begin{table}[H]
	\centering \caption{CPU times for ABDDC-NEW}
	\label{ex-table-cube-time2}\vskip 0.1cm
	{\footnotesize
		\begin{tabular}{{|c|c|c|c|c|c|c|}}\hline
			\backslashbox[2cm]{$N$}{1/h} &20        &40           &80         \\\hline
			8  &2.80(1.22,0.06)    &54.11(23.54,0.05)      &1751.30(531.28,0.09)\\\hline
			27 &1.84(0.64,0.12)    &30.67(16.31,0.25)      &710.67(315.80,0.43)\\\hline
			64 &1.58(0.36,0.22)    &23.44(11.30,0.82)      &450.33(250.97,1.28) \\\hline
		\end{tabular}
	}
\end{table}

\section{Conclusions}

In this paper, we propose and analyze an adaptive BDDC method for the NSPD system arising from advection-diffusion problems.
To address the issue that the generalized eigenvalue problem on edges was not satisfactory in previous studies,
we extend the generalized eigenvalue problem on edges based on the a priori selected primal constraints, originally designed for SPD problems,
to the NSPD systems of advection-diffusion problems.
Numerical experiments demonstrate that the new method achieves significant reductions in both the size of the coarse space and the computational time.

\section*{Acknowledgements}

	Jie Peng is supported by National Natural Science Foundation of China (Grant No. 12101250) and Science and Technology Projects in Guangzhou (Grant No. 202201010644).
	Shi Shu and Junxian Wang are supported by Science Challenge Project, China (Grant No. TZ2024009), National Natural Science Foundation of
	China (Grant No. 12371373), Key Project of Hunan Provincial Department of Education (Grant No. 22A0120), and Hunan Provincial
	Innovation Foundation for Postgraduate, China (Grant No. CX20220644).
	Liuqiang Zhong is supported by National Natural Science Foundation of China (Grant No. 12071160).

\bibliographystyle{abbrv}

\begin{thebibliography}{10}
	\expandafter\ifx\csname url\endcsname\relax
	\def\url#1{\texttt{#1}}\fi
	\expandafter\ifx\csname urlprefix\endcsname\relax\def\urlprefix{URL }\fi
	\expandafter\ifx\csname href\endcsname\relax
	\def\href#1#2{#2} \def\path#1{#1}\fi
	
	\bibitem{CaiXC91:41}
	X.-C. Cai, Additive {Schwarz} algorithms for parabolic convection-diffusion
	equations, Numer. Math. 60~(1) (1991) 41--61.
	
	\bibitem{ToselliA01:5759}
	A.~Toselli, {FETI} domain decomposition methods for scale advection-diffusion
	problems., Comput. Methods Appl. Mech. Engrg. 190~(43-44) (2001) 5759--5776.
	
	\bibitem{GerardoTallec04:745}
	L.~Gerardo-Giorda, P.~L. Tallec, F.~Nataf, A {Robin-Robin} preconditioner for
	advection-diffusion equations with discontinuous coefficients, Comput.
	Methods Appl. Mech. Engrg. 193~(9-11) (2004) 745--764.
	
	\bibitem{TuLi08:25}
	X.~Tu, J.~Li, A balancing domain decomposition method by constraints for
	advection-diffusion problems, Commun. Appl. Math. Comput. Sci. 3~(1) (2008)
	25--60.
	
	\bibitem{PengShu21:184}
	J.~Peng, S.~Shu, J.~Wang, L.~Zhong, An adaptive {BDDC} preconditioner for
	advection-diffusion problems with a stabilized finite element discretization,
	Appl. Numer. Math. 165 (2021) 184--197.
	
	\bibitem{Dohrmann03:246}
	C.~R. Dohrmann, A preconditioner for substructuring based on constrained energy
	minimization, SIAM J. Sci. Comput. 25~(1) (2003) 246--258.
	
	\bibitem{MandelDohrmann03:639}
	J.~Mandel, C.~R. Dohrmann, Convergence of a balancing domain decomposition by
	constraints and energy minimization, Numer. Linear Algebra Appl. 10 (2003)
	639--659.
	
	\bibitem{MandelDohrmann05:167}
	J.~Mandel, C.~R. Dohrmann, R.~Tezaur, An algebraic theory for primal and dual
	substructuring methods by constraints, Appl. Numer. Math. 54~(2) (2005)
	167--193.
	
	\bibitem{LiWidlund06:250}
	J.~Li, O.~B. Widlund, {FETI-DP}, {BDDC}, and block {Cholesky} methods,
	Internat. J. Numer. Methods Engrg. 66~(2) (2006) 250--271.
	
	\bibitem{PechsteinDohrmann17:273}
	C.~Pechstein, C.~R. Dohrmann, A unified framework for adaptive {BDDC},
	Electron. Trans. Numer. Anal. 46 (2017) 273--336.
	
	\bibitem{KimWang20:1928}
	H.~H. Kim, J.~Wang, An adaptive {BDDC} method enhanced with prior selected
	primal constraints, Comput. Math. Appl. 80 (2020) 1928--1943.
	
	\bibitem{CalvoWidlund16:524}
	J.~G. Calvo, O.~B. Widlund, An adaptive choice of primal constraints for {BDDC}
	domain decomposition algorithms, Electron. Trans. Numer. Anal. 45 (2016)
	524--544.
	
	\bibitem{KimChung17:191}
	H.~H. Kim, E.~Chung, J.~Wang, {BDDC} and {FETI-DP} preconditioners with
	adaptive coarse spaces for three-dimensional elliptic problems with
	oscillatory and high contrast coefficients, J. Comput. Phys. 349 (2017)
	191--214.
	
	\bibitem{ThomasLeopoldo89:173}
	T.~J. Hughes, L.~P. Franca, G.~M. Hulbert, A new finite element formulation for
	computational fluid dynamics: {VIII}. {The} galerkin/least-squares method for
	advective-diffusive equations, Comput. Methods Appl. Mech. Engrg. 73~(2)
	(1989) 173--189.
	
	\bibitem{DohrmannPechstein12Tech}
	C.~R. Dohrmann, C.~Pechstein, Constraint and weight selection algorithms for
	{BDDC}, Tech. rep., Sandia National Lab.(SNL-NM), Albuquerque, NM (United
	States) (2012).
	
	\bibitem{AndersonDuffin69:576}
	W.~N. Anderson~Jr, R.~J. Duffin, Series and parallel addition of matrices, J.
	Math. Anal. Appl. 26~(3) (1969) 576--594.
	
	\bibitem{EisenstatElman83:345}
	S.~C. Eisenstat, H.~C. Elman, M.~H. Schultz, Variational iterative methods for
	nonsymmetric systems of linear equations, SIAM J. Numer. Anal. 20~(2) (1983)
	345--357.
	
\end{thebibliography}

\end{document}